\documentclass[11pt,a4paper]{article}

\usepackage{amsmath,amssymb,amsthm}
\usepackage{amsfonts}
\usepackage{graphicx}
\usepackage{hyperref}
\usepackage{enumerate}
\usepackage{mathtools}

\newtheorem{theorem}{Theorem}[section]
\newtheorem{lemma}[theorem]{Lemma}

\newtheorem{corollary}[theorem]{Corollary}

\newtheorem{claim}{Claim}

\theoremstyle{definition}

\theoremstyle{remark}

\newcommand{\ex}{\operatorname{ex}}

\newcommand{\F}{\mathbb{F}}
\newcommand{\PG}{\operatorname{PG}}
\newcommand{\spanop}{\operatorname{span}}

\begin{document}

\title{On the generalized Tur\'{a}n number of the complete bipartite graph $K_{3,b+1}$ \thanks{This research was supported by National Key Research and Development Program of China (No.~2023YFA1010203), National Natural Science Foundation of China (Nos. 12401464, 12471334, 12271337, and 12371347).}
}

\date{}

\author{Jing Wang$^{a}$,   Zixuan Yang$^{b}$\thanks{Corresponding author.}, Junpeng Zhou$^{c,d}$,
~\\[3mm]
\small $^{a}$School of Mathematics and Statistics,  Shaanxi Normal University, Xi'an, Shaanxi, P.R. China\\
\small $^{b}$School of Mathematics,  Northwest University, Xi'an, Shaanxi, P.R. China\\
\small $^{c}$ Department of Mathematics, Shanghai University, Shanghai, P.R. China.\\
\small$^{d}$ Newtouch Center for Mathematics of Shanghai University, Shanghai, P.R. China
}
\maketitle

\begin{abstract}
For graphs $F$ and $H$, let $\mathrm{ex}(n,H,F)$ denote the maximum number of copies of $H$ in an $n$-vertex $F$-free graph. Very recently,  Janzer, Longbrake, and Yepremyan proved that for $3<a\leq b$ and sufficiently large $t$,
\begin{equation*}
\mathrm{ex}(n,K_{a,b},K_{3,t})=\Theta_{a,b,t}(n^3).
\end{equation*}
Later, Hou, Hu, and Wang made this threshold explicit by showing that the conclusion holds for all $t\geq 2\max\{3,\lceil b/2\rceil\}+1$. In particular, for every even $b\geq 6$, this matches the necessary threshold $t=b+1$. 
In this paper, we resolve the remaining case where $b$ is odd. More precisely, we prove that for all fixed integers $b\geq 5$ and $3<a\leq b$,
\begin{equation*}
\mathrm{ex}(n,K_{a,b},K_{3,b+1})=\Theta_{a,b}(n^3).
\end{equation*}

Our construction uses a finite-field point set in $\mathrm{PG}(5,q)$ together with an orthogonal polarity. The key new ingredient is the polynomial splitting lemma due to Andrade, Bary-Soroker, and Rudnick, which produces many planes whose intersections with the point set and their polar planes both have size $b$. This gives a $K_{3,b+1}$-free
incidence graph while preserving $\Omega_{a,b}(n^3)$ copies of $K_{a,b}$.
\end{abstract}

{\noindent{\bf Keywords:} generalized Tur\'{a}n number, complete bipartite graph, random algebraic, finite fields}  

{\noindent{\bf AMS (2020) subject classifications:} 05C35, 05C50}

%%%%%%%%%%%%%%%%%%%%%%%%%%%%%%%%%%%%%%

\setcounter{footnote}{0}
\renewcommand{\thefootnote}{}
\footnotetext{E-mail addresses:  {\tt jingwang\_math@snnu.edu.cn (J. Wang), yangzixuan@nwpu.edu.cn (Z. Yang),  junpengzhou@shu.edu.cn.(J. Zhou)}}

%%%%%%%%%%%%%%%%%%%%%%%%%%%%%%%%%%%%%%

\section{Introduction}
Given a graph $F$, a graph $G$ is \textit{$F$-free} if $G$ does not contain $F$ as a subgraph. The \textit{Tur\'{a}n number} of $F$, denoted by ${\rm{ex}}(n,F)$, is the maximum number of edges in an $n$-vertex $F$-free graph. 

The Tur\'{a}n problem is one of the central topics in Extremal Graph Theory. 
A classical result is the Tur\'{a}n theorem \cite{Tu}, which determines the exact Tur\'{a}n number of the complete graph. %$K_\ell$ on $\ell$ vertices. 
The Erd\H{o}s--Stone--Simonovits theorem \cite{ESi,ESt} gives an asymptotics of the Tur\'{a}n number for any $k$-chromatic graph with $k\geq3$. 
When $F$ is bipartite, the problem of determining $\ex(n,F)$ remains an active topic in extremal graph theory. For an extensive overview of the historical development, we refer the reader to the monograph by Bollob\'{a}s \cite{Bo}. In particular, the K\H{o}v\'{a}ri--S\'{o}s--Tur\'{a}n theorem \cite{Ko-So-Tu} establishes the upper bound $\ex(n, K_{s,t}) = O(n^{2-1/s})$, where $K_{s,t}$ denotes the complete bipartite graph with $s\le t$. 

Given two graphs $H$ and $F$, Alon and Shikhelman \cite{alon2016many} initiated the systematic study of the problem of maximizing the number of copies of $H$ in an $n$-vertex $F$-free graph. This problem, often referred to as the \textit{generalized Tur\'{a}n problem}, has attracted considerable attention in recent years. For a very recent survey on generalized Tur\'{a}n problems, one can refer to the work of Gerbner and Palmer \cite{GePa}. 

We investigate the generalized Tur\'{a}n problem for complete bipartite graphs. For integers $1 \le a \le b$ and $1 \le s \le t$, let $\mathrm{ex}(n, K_{a,b}, K_{s,t})$ denote the maximum number of copies of $K_{a,b}$ that can appear in an $n$-vertex $K_{s,t}$-free graph. The asymptotic order of magnitude of $\mathrm{ex}(n, K_{a,b}, K_{s,t})$ depends  on  $a$ and $s$.
The case $a \le s$ is relatively well understood.
When $a = b = 1$,  the problem reduces to the classical Zarankiewicz problem.
The \ K\H{o}v\'{a}ri--S\'{o}s--Tur\'{a}n theorem \cite{Ko-So-Tu} gives the upper bound $\mathrm{ex}(n, K_{s,t}) = O(n^{2-1/s})$, and matching lower bounds are known to hold when $t$ is sufficiently large compared to $s$; see \cite{Kol, alon1999, Buk}.

Alon and Shikhelman \cite{alon2016many} initiated the systematic study of this generalized problem and determined the order of magnitude of $\mathrm{ex}(n, K_{a,b}, K_{s,t})$ when $K_{a,b}$ is substantially smaller than $K_{s,t}$. Their results were later extended and refined in a number of subsequent works \cite{ma2020generalized, Bay, Ger22}. In particular, Ma, Yuan, and Zhang \cite{ma2020generalized} established the order of magnitude for all sufficiently large $t$ whenever $a < s$ and $b \le s$. The case $a < s \le b$ is trivial for all values of $t$, as observed in \cite{Ger22}. These results together cover all cases with $a \le s$ and  large enough $t$.

On the other hand, very little is known about the order of magnitude of  $\mathrm{ex}(n, K_{a,b}, K_{s,t})$ when $a > s$. For the problem to be nontrivial, one must have $s < a \le b < t$; otherwise the answer follows from elementary counting arguments. For this parameter range, a standard double-counting argument yields the universal upper bound
\begin{equation}\label{equation-upper_bound}
\mathrm{ex}(n, K_{a,b}, K_{s,t}) = O_{a,b,s,t}(n^s),
\end{equation}
first noted in \cite{Ger19} and independently observed in later works \cite{janzer2026generalized, pohoata2026k2t1free}. 

A natural question is whether the upper bound in (\ref{equation-upper_bound}) is tight. 
Recently, Pohoata, Tidor, and Yu~\cite{pohoata2026k2t1free} proved that for all $t \ge 3$,
\begin{equation*}
\ex(n, K_{t,t}, K_{2,t+1}) = \Theta_t(n^2),
\end{equation*}
thereby answering a question of Spiro. %\cite{aa}. 
Subsequently, Taranchuk \cite{Ta} used an explicit construction to show that when $t$ is a prime power and $n = t^{2e-1}$,
\begin{equation*}
\ex(n, K_{t,t}, K_{2,t+1}) = (1 + o(1))\frac{n^2}{2t(t - 1)}.
\end{equation*}

For the case $s=3$, Janzer, Longbrake, and Yepremyan~\cite{janzer2026generalized} showed that for any $3 < a \le b$, there exists $t_0$ such that for all $t \ge t_0$,
\begin{equation*}
\ex(n, K_{a,b}, K_{3,t}) = \Theta_{a,b,t}(n^3).
\end{equation*}
%Their proof is based on a projective point-hyperplane incidence construction in dimension five, together with the bounded-complexity variety lemma of Bukh and Conlon~\cite{bukh2018rational} to control plane sections. 
Very recently, Hou, Hu, and Wang~\cite{hou2026explicit} made this threshold explicit by showing that the conclusion holds for all $t\geq 2\max\{3,\lceil b/2\rceil\}+1$. In particular, for every even $b \geq 6$, this matches the necessary threshold $t = b+1\geq7$. However, the case where $b$ is odd remained open. Indeed, Hou, Hu, and Wang~\cite{hou2026explicit} explicitly list it as Problem~4.1. 

In this paper, we resolve the remaining case where $b\geq5$ is odd, thereby answering Problem~4.1 in \cite{hou2026explicit}. In fact, our argument works for all $b\ge 5$, independently of the parity of $b$. Thus,
apart from the case $b=4$, this also settles the $s=3$ case of Conjecture~6.1 in \cite{janzer2026generalized}.
Our main result is as follows. 

\begin{theorem} \label{thm:main}
Let integers $b \geq 5$ and $4 \leq a \leq b$. Then
\begin{equation*}
\ex(n, K_{a,b}, K_{3,b+1}) = \Theta_{a,b}(n^3). 
\end{equation*}
\end{theorem}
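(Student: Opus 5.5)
The upper bound is the general estimate (\ref{equation-upper_bound}) with $s=3$: every copy of $K_{a,b}$ with $a\ge 4$ has a $3$-subset of its $a$-side, and that triple has fewer than $b+1$ common neighbours. Since $b$ is fixed, each triple lies in $O_{a,b}(1)$ copies, which gives $O(n^3)$. All the work is therefore in the lower bound, which I would prove by an explicit algebraic construction in $\PG(5,q)$, as the abstract indicates.

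\textbf{Construction.} Fix an orthogonal polarity $\perp$ of $\PG(5,q)$, say $x\mapsto x^\perp$ with respect to the form $\sum x_iy_i$. Choose a point set $S\subseteq\PG(5,q)$ cut out by a low-degree variety, for instance a suitable degree-$b$ hypersurface or a curve-based set of size $\Theta(q^5)$. Let $G$ be the bipartite incidence graph with both parts equal to $S$ and $x\sim y$ iff $y\in x^\perp$, so $n=\Theta(q^5)$.

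\textbf{$K_{3,b+1}$-freeness.} Three points $x_1,x_2,x_3$ have common neighbourhood $S\cap\langle x_1,x_2,x_3\rangle^\perp$. If the $x_i$ are independent this is $S\cap\pi$ for a plane $\pi$. If they are collinear, or if the triple lies on one side of the bipartition only, I would handle this separately, either by choosing $S$ so that such degenerate configurations cannot have large common neighbourhoods or by deleting the few bad vertices. The requirement is then that every plane meets $S$ in at most $b$ points unless $S\cap\pi$ contains a curve. I would design $S$ so that its plane sections are zero sets of a single univariate polynomial of degree $b$. The natural choice is $S=\{(1,t,t^2,\dots)\}$-type parametrisations tensored so that incidence with $\pi$ reduces to $f(t)=0$ with $\deg f=b$. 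Then $|S\cap\pi|\le b$ always.

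\textbf{Counting $K_{a,b}$.} A copy of $K_{a,b}$ arises from a plane $\pi$ with $|S\cap\pi|=b$ and $|S\cap\pi^\perp|\ge a$. Indeed, every point of $\pi^\perp$ is orthogonal to every point of $\pi$, so $S\cap\pi$ and $S\cap\pi^\perp$ span a complete bipartite subgraph. Such a pair gives $\binom{|S\cap\pi^\perp|}{a}\ge1$ copies. There are $\Theta(q^9)$ planes, and $q^9=n^{9/5}$, which is short of $n^3=q^{15}$. So the count must instead come from triples, or $S$ must be higher-dimensional. I would recalibrate so that $S$ has size $\Theta(q^{3})$ and $n=\Theta(q^3)$: take $S$ a $3$-dimensional algebraic set, so that there are $q^9=n^3$ planes. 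We then need a positive proportion of planes $\pi$ for which both $|S\cap\pi|=b$ and $|S\cap\pi^\perp|=b$. Writing $S\cap\pi$ as the root set in $\F_q$ of a degree-$b$ polynomial $f_\pi$, and $S\cap\pi^\perp$ as the root set of $g_\pi$, this means both $f_\pi$ and $g_\pi$ split completely over $\F_q$.

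\textbf{Main obstacle.} The hardest step is to show that the pair $(f_\pi,g_\pi)$ splits simultaneously for $\gg q^9$ planes. This is where the polynomial splitting lemma of Andrade--Bary-Soroker--Rudnick enters. For a family of polynomials depending on parameters, with Galois group of the generic member (here of the pair) equal to $S_b\times S_b$, the proportion that split completely is $\approx 1/(b!)^2>0$ by Chebotarev/Lang--Weil. The plan is to parametrise the planes as an open subset of $\mathrm{Gr}(3,6)$ and check that the coefficient map $\pi\mapsto(f_\pi,g_\pi)$ is dominant or suitably nondegenerate, with the two polynomials generically independent. That independence gives the full product Galois group and hence the lemma's hypothesis. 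I expect verifying this independence under the polarity, and excluding planes where $S\cap\pi$ is infinite, to be the main technical work. Once that is done, we get $\Omega_b(q^9)=\Omega_{a,b}(n^3)$ copies of $K_{a,b}$ (taking any $a$-subset of the $b$-set $S\cap\pi^\perp$). Finally, $K_{3,b+1}$-freeness is preserved because every non-degenerate plane has at most $b$ points of $S$.
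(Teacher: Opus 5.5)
Your upper bound is fine in outline. To justify that each triple lies in $O_{a,b}(1)$ copies, you also need that the $b$-side, which contains a triple, has at most $b$ common neighbours; this bounds the choices for the remaining $a-3$ vertices.

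The lower bound has a genuine gap: you never specify a point set $S$, and the idea that makes the count work is missing. You propose to fix $S$, vary the plane, and show that $\pi\mapsto(f_\pi,g_\pi)$ is dominant with generic Galois group $S_b\times S_b$. Dominance is impossible. Planes of $\PG(5,q)$ form a $9$-dimensional family, while pairs of monic degree-$b$ polynomials form a $2b\ge 10$-dimensional space. Moreover, the Andrade--Bary-Soroker--Rudnick theorem says nothing about such a parametrised family. It averages over \emph{all} $F\in\mathcal{M}_b$ with the shifts $h_1,\dots,h_s$ held fixed. Your route would therefore need a monodromy computation for an unspecified $9$-parameter family, and you leave that undone.

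The paper instead puts the randomness into the point set. It takes $S_f=\{[1:x:y:z:x^2:y^2-\alpha z^2+f(x)]\}$ with $f\in\mathcal{M}_b$, and uses the planes $\Pi(A,B,C)$ with $b_2\neq 0$, a family closed under $\perp$. Then $|S_f\cap\Pi|$ and $|S_f\cap\Pi^\perp|$ are the numbers of distinct roots of $f+G_\Pi$ and $f+G_{\Pi^\perp}$, where both $G$'s have degree at most $4<b$. Setting $F=f+G_\Pi$, this is a shifted pair $(F,F+\delta)$ with $\deg\delta<b$. ABR with $s=2$, after discarding non-squarefree polynomials, shows that for \emph{each} fixed plane a proportion $\ge\eta_b$ of the $f$ make it good, uniformly in $\delta$. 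Averaging over $f$ then gives one $f$ with $\Omega_b(q^9)$ good planes.

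The $K_{3,b+1}$-freeness part is also incomplete. You assert that $S$ can be designed so that every plane section is the zero set of a single degree-$b$ univariate polynomial, hence $|S\cap\pi|\le b$ always. You give no such $S$, and this fails for the natural candidate: for $S_f$, planes inside the $3$-spaces $F_c=\{x=cw,\,u=c^2w\}$ meet $S_f$ in a conic with up to $q+1$ points. Saying that such planes, or collinear triples, will be handled separately or removed by deleting vertices is not a mechanism.

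The paper excludes the exceptional planes by polarity. If $\Pi^\perp\subseteq F_c$, then $\Pi\supseteq F_c^\perp=K_c$, and every plane through $K_c$ meets $S_f$ in at most $2$ points. So such a $\Pi$ can never be spanned by three points of $S_f$. The paper also proves that $S_f$ has no three collinear points, so every triple spans a plane $\Pi$ and its common neighbourhood is exactly $S_f\cap\Pi^\perp$. Without an explicit $S$ having these properties, neither the freeness nor the $\Omega(n^3)$ count is established.
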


The upper bound is elementary and follows from a standard double-counting argument. The main difficulty lies in the lower bound, which we establish via an explicit finite-geometric construction. 

We now briefly describe the main ideas behind the construction. Our construction is carried out in the projective space $\PG(5,q)$, that is, the $5$-dimensional projective space over the finite field $\mathbb{F}_q$, where $q$ is an odd prime power. 
We introduce a point set $S_f \subset \PG(5, q)$ parametrized by three affine coordinates, with the remaining coordinates given by quadratic expressions and a univariate polynomial $f$ of degree $b$. The point set is designed such that: 
\begin{itemize}
  \item no three points are collinear (Lemma~\ref{lem:no-three-collinear});
  \item any plane not contained in a certain exceptional family intersects $S_f$ in at most $b$ points (Lemma~\ref{lem:plane-sections});
  \item  the remaining exceptional planes are excluded by a polarity argument (Lemma~\ref{lem:polarity-exceptional}). 
\end{itemize}

For a polynomial $f$ of degree $b$ and a polynomial  $\delta$ of degree less than $b$, we say $f+\delta$ is the \emph{shifted polynomial} of $f$.
The key lemma is a polynomial lemma (Lemma~\ref{lem:double-shift}), which guarantees the existence of many polynomials $f$ such that both $f$ and $f+\delta$ split completely into distinct linear factors, where $\delta$ is any fixed polynomial of degree less than $b$. 
This result follows from the theorem of Andrade, Bary-Soroker, and Rudnick on the independence of shifted factorization types~\cite{andrade2013shifted}. 
Using this lemma, we show that there exists a polynomial $f$ for which the incidence graph derived from $S_f$ contains $\Omega_b(q^9)$ copies of $K_{b,b}$. This yields $\Omega_{a,b}(n^3)$ copies of $K_{a,b}$, establishing the desired lower bound.

The rest of the paper is organized as follows. In Section~\ref{sec:polynomial-lemma}, we establish a polynomial lemma on simultaneous complete splittings of shifted polynomials. In Section~\ref{sec:point-set}, we define the finite-field point set, introduce the orthogonal polarity, and prove the basic geometric properties needed for the construction. More specifically, in Subsection~\ref{sec:no-three-collinear} we show that $S_f$ contains no three collinear points, and in Subsection~\ref{sec:plane-sections} we analyze the intersections of $S_f$ with projective planes. 
In Section~\ref{sec:construction}, we construct the $K_{3,b+1}$-free bipartite incidence graph and prove that for a suitable choice of $f$, it contains many copies of $K_{a,b}$. In Section~\ref{sec:proof}, we prove Theorem~\ref{thm:main}. Finally, we give some concluding remarks in Section~\ref{sec:concluding}.

%The rest of the paper is organized as follows. In Section~\ref{sec:polynomial-lemma}, we establishes the polynomial lemma on double-shift complete splittings. In Section~\ref{sec:point-set}, we define the finite-field point set, establish the polarity argument, and prove its basic geometric properties. In particular, we establish that $S_f$ contains no three collinear points in Subsection~\ref{sec:no-three-collinear}, and analyze the intersections of the point set with affine planes in Subsection~\ref{sec:plane-sections}. In Section~\ref{sec:construction}, we construct the $K_{3,b+1}$-free incidence graph. In Section \ref{sec:proof}, we give the proof of Theorem \ref{thm:main}. Finally, we state some concluding remarks in Section~\ref{sec:concluding}.

\section{A polynomial lemma over finite fields} \label{sec:polynomial-lemma}

In this section, we give a lower bound for a polynomial and its shifted polynomials simultaneously splitting completely over finite fields, which will be useful for our construction.

%Firstly, we introduce the following lemma about the cycle structure of shifted polynomials. 
%which is an immediate consequence of the finite-field shifted factorization theorem of Andrade, Bary-Soroker and Rudnick \cite{andrade2013shifted}, after excluding the $O_b(q^{b-1})$ nonsquarefree polynomials.
% that will be used to ensure the existence of many ``rich'' planes in our construction. %The lemma  asserts that, for any fixed polynomial $\delta$ of degree less than $b$, there are many monic degree-$b$ polynomials $F$ such that both $F$ and $F + \delta$ split completely into distinct linear factors.

Let $\mathcal{M}_b$ denote the set of monic polynomials of degree $b$ over $\F_q$. Then  $|\mathcal{M}_b| = q^b$.
For $f\in \mathcal{M}_b$, define the factorization type of $f$ by $\mathrm{type}(f)=(\nu_1(f),\nu_2(f),\ldots,\nu_b(f))$,  where $\nu_i(f)$ is the number of irreducible factors of $f$ of degree $i$. 
%Thus,
%$$
%\sum_{j=1}^d j\nu_j(f)=d.
%$$
For a partition $\mu=(\mu_1,\ldots,\mu_b)$ of $b$, let
$$
\chi_\mu(f)=
\begin{cases}
1, & \text{if } \operatorname{type}(f)=\mu,\\
0, & \text{otherwise},
\end{cases}
$$
and
$$
p(\mu)=\prod_{j=1}^b \frac{1}{j^{\mu_j}\mu_j!}.
$$
%Equivalently, \(p(\mu)\) is the probability that a uniformly random permutation in \(S_d\) has cycle type \(\mu\).
Let $\mathrm{deg}(f)$ denote the degree of the polynomial $f$. We firstly introduce the following result for shifted polynomials of $f$.
\begin{theorem}[Andrade, Bary-Soroker and Rudnick, \cite{andrade2013shifted}]\label{ABR-th}
For fixed positive integers $b$ and $s$, we have
\[
\frac{1}{q^b} \sum_{f \in \mathcal{M}_b} \chi_{\mu_1}(f+h_1) \cdots \chi_{\mu_s}(f+h_s) = p(\mu_1) \cdots p(\mu_s) + O_b(q^{-1/2}),
\]
uniformly for all partitions $\mu_1, \dots, \mu_s$ and distinct polynomials $h_1, \dots, h_s \in \mathbb{F}_q[t]$ with $\deg(h_i) < b$ as $q \to \infty$.
\end{theorem}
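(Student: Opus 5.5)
This statement is quoted from \cite{andrade2013shifted}, and the paper uses it as a black box. If I had to prove it, I would use the standard ``Galois group plus explicit Chebotarev'' strategy over the function field of coefficient space. The idea is to treat the coefficients $A=(A_0,\dots,A_{b-1})$ of a monic degree-$b$ polynomial as independent indeterminates, and set
\[
\mathcal{F}(A,t)=t^b+A_{b-1}t^{b-1}+\cdots+A_0 \in \F_q(A)[t].
\]
Consider the $s$ polynomials $\mathcal{F}+h_1,\dots,\mathcal{F}+h_s$ and let $L$ be the splitting field of their product over $K=\F_q(A)$. For each $i$, the substitution $A\mapsto A+(\text{coefficients of }h_i)$ is a linear change of variables. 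So $\mathcal{F}+h_i$ is again the generic polynomial, and its Galois group over $\overline{\F}_q(A)$ is the full symmetric group $S_b$.

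\textbf{Step 1: the joint Galois group.} The main step, and the expected obstacle, is to show that $\mathrm{Gal}(L\overline{\F}_q/\overline{\F}_q(A))=S_b^s$, that is, the $s$ splitting fields are linearly disjoint. I would argue by Goursat's lemma inductively. A proper subdirect product would force a common nontrivial quotient of $S_b$, which for $b\ge 5$ means a common quadratic subfield $K(\sqrt{\mathrm{disc}(\mathcal{F}+h_i)})$, or (for small $b$) a few extra exceptional quotients. I would rule this out by ramification. The discriminant hypersurfaces $\{\mathrm{disc}(\mathcal{F}+h_i)=0\}$ are irreducible, and for distinct $h_i$ they are distinct translates of one another. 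The quadratic extension for index $i$ ramifies exactly along hypersurface $i$, so no two of these extensions coincide, and no product of them is unramified.

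To make this concrete, I would restrict to a generic line in $A$-space, e.g. vary only $A_0$ with the other coefficients generic. This gives $f(t)+A_0+c_i$ with $f$ a Morse polynomial, whose Galois group is $S_b$ generated by transpositions, and whose branch points for different $i$ are disjoint translates. Inertia at the branch points of the $i$-th factor then acts trivially on the other factors and as a transposition on the $i$-th one. Transpositions generate, so the group contains $S_b^s$. Because this geometric group is already the full $S_b^s$, the arithmetic group equals it, and there is no constant field extension.

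\textbf{Step 2: Chebotarev and counting.} Apply an explicit Chebotarev density theorem for the Galois cover of affine $b$-space defined by $L$. Its proof rests on Lang--Weil bounds for the cover variety, whose degree is bounded in terms of $b$ and $s$ only. This gives, for the conjugacy class $C=C_{\mu_1}\times\cdots\times C_{\mu_s}$,
\[
\#\{f\in\mathcal{M}_b \text{ unramified}: \mathrm{Frob}_f\in C\}=\frac{|C|}{|S_b|^s}\,q^b+O_{b,s}(q^{b-1/2}).
\]
For squarefree specializations, the cycle type of Frobenius on the roots of $f+h_i$ equals the factorization type of $f+h_i$, and $|C_\mu|/b!=p(\mu)$. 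The ramified $f$ are those where some $f+h_i$ has zero discriminant; these lie on a hypersurface and contribute only $O_b(q^{b-1})$. Dividing by $q^b$ gives the claimed formula.

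All implied constants depend only on the degrees of the varieties involved, which are bounded in $b$ and $s$. Hence the estimate is uniform in the partitions $\mu_i$ and in the shifts $h_i$.
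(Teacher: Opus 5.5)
The paper does not prove this statement; it quotes it from Andrade--Bary-Soroker--Rudnick and uses it only with $s=2$ and $q$ an odd prime. Your plan is the route by which this result is proved in the literature: show that $\prod_i(\mathcal{F}+h_i)$ has Galois group $S_b^s$ over $\overline{\mathbb{F}}_q(A)$, so there is no constant field extension, then apply an explicit Chebotarev bound on $\mathbb{A}^b$ and discard the $O_b(q^{b-1})$ ramified $f$. Step~2 is fine. Step~1 has problems, starting with your ``concrete'' line restriction, which is wrong as stated. Freezing $A_1,\dots,A_{b-1}$ turns $\mathcal{F}+h_i$ into $(f+h_i)(t)+A_0$, up to a constant shift of $A_0$. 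So the branch points for index $i$ are the critical values of the \emph{different} polynomials $f+h_i$, and these are translates of one set only when all $h_i$ are constants. Hilbert's Morse argument also needs $p\nmid b$. If $p\mid b$ then $\deg f'\le b-2$, so the finite branch points supply at most $b-2$ transpositions, which cannot generate a transitive group, and the cover is wildly ramified at $\infty$. In characteristic $2$ there are no Morse polynomials at all, since $f'$ is a square. Because $q\to\infty$ along powers of a fixed small prime is allowed, this is not cosmetic.

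The line is unnecessary; you can work over $\overline{\mathbb{F}}_q(A)$ directly. There each factor has group $S_b$, because translation by the coefficients of $h_i$ is an automorphism. Now suppose $p$ is odd and the divisor $\{D_i=0\}$, with $D_i=\mathrm{disc}(\mathcal{F}+h_i)$, differs from every $\{D_j=0\}$ for $j\ne i$. Then inertia at its generic point is generated by an element that is a transposition in slot $i$ and trivial in all other slots. Conjugating by $G$, whose projection to slot $i$ is onto, gives every transposition in slot $i$, so $G=S_b^s$. This handles all $b$ at once, including $b\le 4$ and $b=6$, with no Goursat case analysis.

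What remains is the actual content: the hypersurfaces $\{D=0\}-c$ are pairwise distinct for distinct $c$ with $\deg c<b$. You assert this (``distinct translates''), but in characteristic $p$ it is not automatic. An irreducible hypersurface can be invariant under the order-$p$ group generated by a translation, for example $\{x_2=x_1^p-x_1\}$. For $b\ge 3$ you can argue as follows. If $\{D=0\}$ were stable under $f\mapsto f+c$, then for every $\alpha\in\overline{\mathbb{F}}_q$, every monic $F$ of degree $b$ with $F(\alpha)=c(\alpha)$ and $F'(\alpha)=c'(\alpha)$ would have a repeated root; these are exactly the polynomials $(t-\alpha)^2g+c$. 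But a squarefree such $F$ exists whenever $(c(\alpha),c'(\alpha))\neq(0,0)$. Hence $c$ and $c'$ vanish on all of $\overline{\mathbb{F}}_q$, which forces $c=0$. For $b=2$ and $p$ odd it is a one-line computation.

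Finally, your use of $K(\sqrt{\mathrm{disc}})$ and of tame order-$2$ inertia requires $p$ odd, and in characteristic $2$ the distinctness claim itself fails: $\mathrm{disc}(t^2+A_1t+A_0)=A_1^2$ is invariant under $A_0\mapsto A_0+1$. So, as written, your argument proves the statement only for odd $q$. That is all the paper needs, but the quoted statement has no parity hypothesis, so you should either add it or treat even $q$ separately.
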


Let $s = 2$, and $\mu_1 = \mu_2=(b,0,\ldots,0)$ in Theorem \ref{ABR-th}.
Then $\chi_{\mu_i}(f)$ is $1$ if and only if $f$ splits into $b$ linear factors over $\mathbb{F}_q$ (Here we allow multiple roots). And $p(\mu_1)=p(\mu_2)= \frac{1}{b!}.$
Let $h_1 = 0$ and $h_2 = \delta\neq 0$ with $\deg \delta < b$.
Applying Theorem \ref{ABR-th} gives
\[
\frac{1}{q^b} \sum_{f \in \mathcal{M}_b} \chi_{\mu_1}(f) \, \chi_{\mu_2}(f+\delta) = \frac{1}{(b!)^2} + O_b(q^{-1/2}).
\]
Multiplying both sides by $q^b$ , we get the following result.
\begin{corollary}\label{f and f+delta split}
    Let $\delta\neq 0$. The number of $f \in \mathcal{M}_b$ such that $f$ and $f+\delta$ split into $b$ linear factors is 
\begin{equation*}\label{f and f+delta split}
    \frac{q^b}{(b!)^2} + O_b\left(q^{b-1/2}\right),
\end{equation*}
%which is uniform for all $\delta$ with $\deg \delta < b$. 
\end{corollary}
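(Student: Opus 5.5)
This is a direct specialization of Theorem~\ref{ABR-th}, so the proof amounts to checking that the hypotheses hold for the chosen data and then rescaling. I would take $s=2$, $h_1=0$, $h_2=\delta$, and $\mu_1=\mu_2=(b,0,\ldots,0)$. That is, $\nu_1=b$ and all other $\nu_i=0$, which is the factorization type of a product of $b$ monic linear factors, repeats allowed.

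The verification has three steps.

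\textbf{Hypotheses of Theorem~\ref{ABR-th}.} The polynomials $h_1$ and $h_2$ must be distinct and of degree less than $b$. Distinctness is exactly $\delta\neq 0$, and we take $\deg\delta<b$ as in the setup preceding the corollary. This degree condition also ensures that $f+\delta$ is again monic of degree $b$, so its factorization type makes sense.

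\textbf{The indicator.} For every monic $g$ of degree $b$, $\chi_{\mu_1}(g)=1$ holds exactly when all irreducible factors of $g$ are linear, i.e.\ when $g$ splits into $b$ linear factors over $\F_q$. Hence the product $\chi_{\mu_1}(f)\chi_{\mu_2}(f+\delta)$ is the indicator that both $f$ and $f+\delta$ split. Summing over $f\in\mathcal{M}_b$ therefore counts precisely the polynomials in the corollary.

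\textbf{The main term.} Here $p(\mu)=\frac{1}{1^b\, b!}=\frac{1}{b!}$, so $p(\mu_1)p(\mu_2)=\frac{1}{(b!)^2}$. Multiplying the identity of Theorem~\ref{ABR-th} by $q^b$ gives the count
\[
\frac{q^b}{(b!)^2}+O_b\!\left(q^{b-1/2}\right).
\]

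There is no real obstacle; the only point needing care is uniformity. Theorem~\ref{ABR-th} is uniform in the partitions and in the shifts $h_i$, so the implied constant depends only on $b$ and not on $\delta$. The bound is therefore valid for every admissible $\delta$ simultaneously, which is what later applications with varying $\delta$ require.

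I would also add a remark that the corollary as stated allows repeated roots. If distinct roots are wanted later, one subtracts the non-squarefree $f$, and likewise those with $f+\delta$ non-squarefree. There are $O_b(q^{b-1})$ of each: for $f$ this is the standard count of non-squarefree monic polynomials, $q^{b-1}$ for $b\ge 2$, and for $f+\delta$ it follows by the translation $f\mapsto f+\delta$. This subtraction is absorbed into the error term.
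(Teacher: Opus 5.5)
Your proposal is correct and follows the paper's own argument: specialize Theorem~\ref{ABR-th} to $s=2$, $h_1=0$, $h_2=\delta$, $\mu_1=\mu_2=(b,0,\ldots,0)$, use $p(\mu_i)=1/b!$, and multiply by $q^b$. Your extra remarks on uniformity in $\delta$ and on discarding the $O_b(q^{b-1})$ non-squarefree polynomials match what the paper does afterward in the proof of Lemma~\ref{lem:double-shift}.
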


%\[
%\#\left\{f \in \mathcal{M}_b : f \text{ and } f+\delta \text{ split into $b$ linear factors} \right\} = \frac{q^b}{(b!)^2} + O_b\left(q^{b-1/2}\right),
%\]

%This ensures that our final constants $q_0$ and $\eta_b$ will not depend on $\delta$.

Next we give the lower bound for the number of $f \in \mathcal{M}_b$ with $f$ and $f+\delta$ split into $b$ distinct linear factors for sufficiently large $q$.

\begin{lemma} \label{lem:double-shift}
Let $b \geq 5$ be an integer. There exist constants $q_0=q_0(b)$ and $\eta_b > 0$ such 
that for $q \geq q_0$ and polynomial 
$\delta \in \F_q[t]$ with $\deg \delta < b$, there are at least $\eta_b q^b$ polynomials 
$f \in \mathcal{M}_b$ such that both $f$ and $f + \delta$ split into $b$ 
distinct linear factors over $\F_q$.
\end{lemma}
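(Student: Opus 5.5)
We start from Corollary~\ref{f and f+delta split} and remove the non-squarefree polynomials. We first dispose of the degenerate case $\delta=0$, which the corollary does not cover. Here the requirement is only that $f$ splits into $b$ distinct linear factors, and the number of such monic $f$ is exactly $\binom{q}{b}$. For $q\ge 2b$ this is at least $q^b/(2^b b!)$, which is at least $q^b/(2(b!)^2)$ once $b\ge 2$. So it suffices to treat $\delta\neq 0$.

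For $\delta\ne 0$ with $\deg\delta<b$, Corollary~\ref{f and f+delta split} gives at least
\[
\frac{q^b}{(b!)^2}-C_bq^{b-1/2}
\]
polynomials $f\in\mathcal M_b$ such that both $f$ and $f+\delta$ split into linear factors, possibly with repeated roots. The constant $C_b$ is uniform in $\delta$, since Theorem~\ref{ABR-th} is uniform in the shifts $h_i$. From this count we subtract:
\begin{itemize}
\item the number of $f\in\mathcal M_b$ that split with a repeated root;
\item the number of $f$ for which $f+\delta$ splits with a repeated root.
\end{itemize}

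For the first quantity, a completely split monic $f$ with a repeated root has the form $(t-\alpha)^2g$, where $\alpha\in\F_q$ and $g$ is a monic split polynomial of degree $b-2$. There are at most $q\cdot q^{b-2}=q^{b-1}$ such $f$. More generally, the number of non-squarefree monic polynomials of degree $b$ is $q^{b-1}$, and this bound suffices.

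For the second quantity, the map $f\mapsto f+\delta$ is a bijection of $\mathcal M_b$, because $\deg\delta<b$ preserves monicity and degree. The number of $f$ with $f+\delta$ non-squarefree is therefore also at most $q^{b-1}$.

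Hence the number of $f$ such that both $f$ and $f+\delta$ split into $b$ distinct linear factors is at least
\[
\frac{q^b}{(b!)^2}-C_bq^{b-1/2}-2q^{b-1}.
\]
Choosing $q_0(b)$ so large that $C_bq^{-1/2}+2q^{-1}\le \frac{1}{2(b!)^2}$ for all $q\ge q_0$ gives the lemma with $\eta_b=\frac{1}{2(b!)^2}$. The same $q_0$, enlarged if necessary so that $q_0\ge 2b$, also covers the case $\delta=0$.

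The only delicate point is uniformity: $q_0$ and $\eta_b$ must not depend on $\delta$. This comes directly from the uniformity clause in Theorem~\ref{ABR-th}, and the squarefree error bounds above are likewise independent of $\delta$. The hypothesis $b\ge5$ is not needed in this argument; any $b\ge 2$ works.
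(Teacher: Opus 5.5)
Your proposal is correct and follows the paper's proof essentially step for step. For $\delta\neq 0$ you apply the Andrade--Bary-Soroker--Rudnick corollary, subtract the $O(q^{b-1})$ non-squarefree $f$ and (via the bijection $f\mapsto f+\delta$) the non-squarefree $f+\delta$, and you treat $\delta=0$ separately through the count $\binom{q}{b}$; your explicit constants ($2q^{b-1}$, $q_0\ge 2b$, $\eta_b=1/(2(b!)^2)$) just make the paper's $O_b$ bookkeeping concrete.
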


\begin{proof}

%which is strictly larger than the leading term above, so the lower bound will hold automatically.

If $\delta\neq0$, by Corollary \ref{f and f+delta split}, then we only need to consider the multiple roots of $f$ and $f+\delta$.
The polynomial $f$ is said to be \emph{not square-free} if there exists a monic irreducible  polynomial $P$ such that $P^2 \mid f$. The number of not square-free polynomials $f\in \mathcal{M}_b$ is at most
%We first assert that the total number of non-square-free monic degree-$b$ polynomials over $\mathbb{F}_q$ is $O_b(q^{b-1})$. 
%Indeed, if $F$ is not square-free, there exists a monic irreducible polynomial $P$ such that $P^2 \mid F$. Let $r = \deg P$, so $2r \leq b$. There are at most $q^r$ monic irreducible polynomials of degree $r$, and at most $q^{b-2r}$ choices for the cofactor $F/P^2$. Summing over $r = 1, 2, \dots, \lfloor b/2 \rfloor$, we have
\[
\sum_{r=1}^{\lfloor b/2 \rfloor}q^rq^{b-2r}=\sum_{r=1}^{\lfloor b/2 \rfloor}q^{b-r}=O_b(q^{b-1}).
\]
%which  yields a total of $O_b(q^{b-1})$ non-square-free polynomials.

Note that the map $f \mapsto f+\delta$ is a bijection on $\mathcal{M}_b$.
%: since $\deg \delta < b$, the leading term $T^b$ is preserved, and the inverse map is $F \mapsto F-\delta$. 
Therefore, the number of $f$ for which $f+\delta$ is not square-free is also $O_b(q^{b-1})$.
By Corollary \ref{f and f+delta split} and deleting all not square-free polynomials $f$ and $f+\delta$, we conclude that  the number of $f \in \mathcal{M}_b$ such that $f$ and $f+\delta$ split into $b$ distinct linear factors is 
$$\frac{q^b}{(b!)^2} + O_b\left(q^{b-1/2}\right).$$
%By the union bound, the number of polynomials $F$ for which at least one of $F$ or $F+\delta$ has a repeated root is at most
%$$O_b(q^{b-1}) + O_b(q^{b-1}) = O_b(q^{b-1}).$$
%Since $q^{b-1}$ is of lower order than $q^{b-1/2}$, subtracting these bad polynomials does not affect the leading term. We conclude that 
%\[
%\#\left\{ F \in \mathcal{M}_b : F, F+\delta \text{ split into distinct linear factors} \right\} = \frac{q^b}{(b!)^2} + O_b\left(q^{b-1/2}\right).
%\]
By the definition of $O_b\left(q^{b-1/2}\right)$, there exists a constant $C_b > 0$ such that
$$\left| O_b\left(q^{b-1/2}\right) \right| \leq C_b \cdot q^{b-1/2}.$$
Thus, we have
\begin{equation}\label{exKabK3b+1-equation1}
\frac{q^b}{(b!)^2} + O_b\left(q^{b-1/2}\right) \geq \frac{q^b}{(b!)^2} - C_b \, q^{b-1/2} = q^b \left( \frac{1}{(b!)^2} - \frac{C_b}{\sqrt{q}} \right).
\end{equation}
Choose $q_0 = q_0(b)$ sufficiently large so that for all $q \geq q_0$,
\begin{equation}\label{exKabK3b+1-equation2}
\frac{C_b}{\sqrt{q}} \leq \frac{3}{4(b!)^2}.
\end{equation}
Fix $\eta_b=\frac{1}{4(b!)^2}$.  Substituting (\ref{exKabK3b+1-equation2}) into (\ref{exKabK3b+1-equation1}), we get
$$\frac{q^b}{(b!)^2} + O_b\left(q^{b-1/2}\right) \geq \eta_bq^b.$$ 

If $\delta = 0$, then the number of $f \in \mathcal{M}_b$ such that $f$ and $f+\delta$ split into $b$ distinct linear factors is 
$$\binom{q}{b} = \frac{q^b}{b!} + O_b(q^{b-1}).$$
Similar to the discussion for $\delta\neq 0$, we get the desired result.
\end{proof}

\section{The algebraic point set and orthogonal polarity}
\label{sec:point-set}

%Then the quadratic form
%$
%r^2 - \alpha s^2=0
%$
%if and only if $r = s = 0$.

Let $\PG(5,q)$ be the $5$-dimensional projective space over $\mathbb{F}_q$, whose points are represented by homogeneous coordinates $[w:x:y:z:u:v]$. The affine chart defined by $w = 1$ is naturally identified with the 5-dimensional affine space $\mathbb{A}^5$ with coordinates $(x,y,z,u,v)$.

Let $\alpha \in \mathbb{F}_q$ be a fixed nonsquare element and $f \in \mathcal{M}_b$. We define the point set
\[
S_f = \left\{ s_f(x, y, z) : x, y, z \in \mathbb{F}_q \right\},
\]
where 
\begin{equation}\label{exKabK3b+1-equation3}
    s_f(x, y, z) = \left[ 1 : x : y : z : x^2 : y^2 - \alpha z^2 + f(x) \right].
\end{equation}
Since $(x,y,z)$ uniquely determine each point, the parametrization is injective. Thus
\[
|S_f| = q^3.
\]

Let $p_1=(w_{p_1},x_{p_1},y_{p_1},z_{p_1},u_{p_1},v_{p_1})$ and $p_2=(w_{p_2},x_{p_2},y_{p_2},z_{p_2},u_{p_2},v_{p_2})$ be two points of $\mathbb{F}_q^6$.
We define the symmetric bilinear form $\beta$ over $\mathbb{F}_q^6$ by 
\[
\begin{aligned}
\beta(p_1, p_2) = \, & w_{p_1} y_{p_2} + y_{p_1} w_{p_2} + x_{p_1} v_{p_2} + v_{p_1} x_{p_2} +z_{p_1} z_{p_2} + u_{p_1} u_{p_2}.
\end{aligned}
\]
Reordering the basis $(w,y), (x,v), z, u$, then the matrix of $\beta$ is
$$
\operatorname{diag}\left(\begin{pmatrix}0&1\\1&0\end{pmatrix},\begin{pmatrix}0&1\\1&0\end{pmatrix},1,1\right),
$$
which is invertible. Hence, $\beta$ is nondegenerate.

For any projective subspace $\Lambda\subseteq \PG(5,q)$, let $\Lambda^\perp$ denote its polar subspace with respect to $\beta$. Since $\beta$ is nondegenerate, we have
\[
\dim \Lambda + \dim \Lambda^\perp = 4,
\]
and $(\Lambda^\perp)^\perp = \Lambda$. In particular, the polar of a projective plane is also a projective plane.

For $c \in \mathbb{F}_q$, define the 3-dimensional projective subspace of $\PG(5,q)$
\begin{equation}\label{exKabK3b+1-equation4}
F_c = \{ x = c w,\ u = c^2 w \}.
\end{equation}
For the points $(x, y, z, u, v)$ in the affine chart $w=1$, define the projection
\[
\rho : \mathbb{A}^5 \to \mathbb{A}^2, \quad \rho(x, y, z, u, v) = (x, u).
\]

\subsection{No three collinear points}
\label{sec:no-three-collinear}

In this subsection, we prove that $S_f$ contains no three collinear points, which implies that any three points of $S_f$ span a plane.

\begin{lemma} \label{lem:no-three-collinear}
For every $f \in \mathcal{M}_b$, the set $S_f$ contains no three distinct collinear points.
\end{lemma}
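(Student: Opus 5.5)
Three points of $S_f$ all lie in the affine chart $w=1$, so it suffices to show that three distinct points $P_i=s_f(x_i,y_i,z_i)$, $i=1,2,3$, cannot lie on a common affine line. Collinearity means there are $\lambda_1,\lambda_2,\lambda_3\in\F_q$, not all zero, with $\sum_i\lambda_i=0$ and $\sum_i\lambda_i P_i=0$ coordinatewise. Equivalently, $P_3=(1-t)P_1+tP_2$ for some $t\in\F_q\setminus\{0,1\}$, since the points are distinct and the parametrization is injective.

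The argument works through the coordinates in order, using the $x$- and $u=x^2$-coordinates first. The relations $x_3=(1-t)x_1+tx_2$ and $x_3^2=(1-t)x_1^2+tx_2^2$ give
\[
(1-t)x_1^2+tx_2^2-\bigl((1-t)x_1+tx_2\bigr)^2=t(1-t)(x_1-x_2)^2=0.
\]
Since $t\ne0,1$, this forces $x_1=x_2$, and therefore $x_3=x_1$. All three points then share the same $x$, so the terms $x^2$ and $f(x)$ are equal for all of them and $f$ plays no role.

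Next the $y,z,v$ coordinates are handled the same way. The $y$- and $z$-coordinates are affine along the line, while the $v$-coordinate equals $y^2-\alpha z^2+f(x_1)$. Write $Q(y,z)=y^2-\alpha z^2$. The identical computation gives
\[
t(1-t)\,Q(y_1-y_2,\,z_1-z_2)=0,
\]
so $(y_1-y_2)^2=\alpha(z_1-z_2)^2$. Because $\alpha$ is a nonsquare, $Q$ is anisotropic over $\F_q$. This forces $y_1=y_2$ and $z_1=z_2$, hence $P_1=P_2$, which is a contradiction.

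The only step that needs care is justifying anisotropy. If $z_1\ne z_2$, then $\alpha=\bigl((y_1-y_2)/(z_1-z_2)\bigr)^2$ would be a square, contradicting the choice of $\alpha$. Otherwise $z_1=z_2$, and then $y_1=y_2$ follows. Here $q$ is odd, so $t(1-t)$ is a nonzero scalar we may cancel; no other field-characteristic issue arises. The conclusion is that any three distinct points of $S_f$ are non-collinear, and so they span a plane.
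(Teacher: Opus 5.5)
Your proof is correct and is essentially the paper's argument: the $u=x^2$ coordinate forces the three $x$-coordinates to agree, and then the $v$-coordinate together with the anisotropy of $y^2-\alpha z^2$ for nonsquare $\alpha$ forces the $y$- and $z$-coordinates to agree as well. One small correction: cancelling $t(1-t)$ needs only $t\neq 0,1$ and works in any characteristic, whereas oddness of $q$ is what guarantees that a nonsquare $\alpha$ exists at all.
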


\begin{proof}
Suppose that there exist three distinct collinear points in $S_f$:
%Since all points lie in the affine chart $w=1$, suppose that these three points are
\[
s_f(p), \quad s_f(p + h), \quad s_f(p + \lambda h),
\]
where $p = (x, y, z)$, $h = (r, s, u) \neq (0, 0, 0)$, and $\lambda \in \mathbb{F}_q \setminus \{0, 1\}$. Then
\[
s_f(p + \lambda h) = (1 - \lambda) s_f(p) + \lambda s_f(p + h).
\]
Combining with (\ref{exKabK3b+1-equation3}), we get
\[
(x + \lambda r)^2 = (1 - \lambda) x^2 + \lambda (x + r)^2,
\]
which implies that
\[
\lambda(\lambda - 1) r^2 = 0.
\]
Since $\lambda \neq 0, 1$, we have $r = 0$.
Thus, the $x$-coordinate is constant across all three points. So $f(x)$ is also constant. Now we compare the $v$-coordinates, 
\[
(y + \lambda s)^2 - \alpha (z + \lambda u)^2 = (1 - \lambda)(y^2 - \alpha z^2) + \lambda ((y + s)^2 - \alpha (z + u)^2),
\]
which implies that
\[
\lambda(\lambda - 1) (s^2 - \alpha u^2) = 0.
\]
Since $\lambda \neq 0, 1$, we obtain
\[
s^2 - \alpha u^2 = 0.
\]
Since $\alpha$ is a fixed nonsquare element, we have $s = u = 0$. Thus, $h = (0,0,0)$, contradicting $h \neq 0$.
Therefore, no three distinct points of $S_f$ are collinear.
\end{proof}

\subsection{Plane sections}
\label{sec:plane-sections}

%In this subsection, we obtain two bounds on the size of intersections between $S_f$ and projective planes: generic planes intersect $S_f$ in at most $b$ points (see Lemma \ref{lem:plane-sections}), while planes containing an exceptional line intersect $S_f$ in at most $2$ points (see Lemma \ref{lem:polarity-exceptional}).

In this subsection, we obtain the upper bounds on the size of the intersection of $S_f$ with two families of projective
planes, planes that are not contained in any $F_c$ (as shown in (\ref{exKabK3b+1-equation4}), and  planes that contain the polar of $F_c$.
\begin{lemma} \label{lem:plane-sections}
Let $b \geq 5$  and let $\Pi$ be a projective plane not contained in $F_c$ for any $c \in \mathbb{F}_q$. Then
\[
|S_f \cap \Pi| \leq b.
\]
\end{lemma}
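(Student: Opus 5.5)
The plan is to work with linear forms rather than with the plane directly. Let $L$ be the $3$-dimensional space of linear forms on $\F_q^6$ vanishing on $\Pi$. A point $s_f(x,y,z)$ lies in $\Pi$ exactly when every $\ell\in L$ vanishes at it. Substituting the parametrization \eqref{exKabK3b+1-equation3} into $\ell=a_0w+a_1x+a_2y+a_3z+a_4u+a_5v$ gives the polynomial
\[
P_\ell(x,y,z)=a_0+a_1x+a_2y+a_3z+a_4x^2+a_5\bigl(y^2-\alpha z^2+f(x)\bigr).
\]
The functions $1,x,y,z,x^2,y^2-\alpha z^2+f(x)$ are linearly independent polynomials, so $\ell\mapsto P_\ell$ is injective. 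Hence $S_f\cap\Pi$ corresponds to the common zeros in $\F_q^3$ of a $3$-dimensional space of such polynomials.

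\textbf{Step 1 (filtration).} Let $L'=\{\ell\in L: a_5=0\}$, which has $\dim L'\ge 2$. Let $L''=\{\ell\in L': a_2=a_3=0\}$. For $\ell\in L''$, $P_\ell=a_0+a_1x+a_4x^2$ is a polynomial in $x$ alone.

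\textbf{Case A: $L''\neq 0$.} Take $0\neq\ell\in L''$. By injectivity, $P_\ell$ is a nonzero polynomial of degree at most $2$ in $x$, so points of $S_f\cap\Pi$ have at most two values $x=c$. All points of $S_f$ with $x=c$ lie in $F_c$, so for each such $c$ they lie in $\Pi\cap F_c$. Since $\Pi\not\subseteq F_c$, the intersection $\Pi\cap F_c$ is at most a line. By Lemma~\ref{lem:no-three-collinear}, it contains at most $2$ points of $S_f$. Thus $|S_f\cap\Pi|\le 4<b$. This case automatically covers $\dim L'=3$: then $a_5=0$ on all of $L$, and the map $\ell\mapsto(a_2,a_3)$ from the $3$-dimensional space $L'$ to $\F_q^2$ has a nonzero kernel, so $L''\neq0$.

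\textbf{Case B: $L''=0$.} Then $\dim L'=2$ and the map $\ell\mapsto(a_2,a_3)$ is an isomorphism $L'\to\F_q^2$. So $L'$ contains forms giving $y-Y(x)$ and $z-Z(x)$ with $\deg Y,\deg Z\le 2$. On $S_f\cap\Pi$, $y$ and $z$ are therefore determined by $x$. Since $\dim L=3$, there is a third form $\ell\in L$ with $a_5\neq0$. Substituting $y=Y(x)$ and $z=Z(x)$ into $P_\ell$ gives
\[
g(x)=a_0+a_1x+a_2Y(x)+a_3Z(x)+a_4x^2+a_5\bigl(Y(x)^2-\alpha Z(x)^2+f(x)\bigr).
\]
All terms other than $a_5f(x)$ have degree at most $4$. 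Since $b\ge5$ and $f$ is monic of degree $b$, $g$ has leading term $a_5x^b\neq0$. Hence $g$ has at most $b$ roots, and each root determines a unique point $s_f(x,Y(x),Z(x))$. So $|S_f\cap\Pi|\le b$.

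The only delicate points are the injectivity of $\ell\mapsto P_\ell$ and the degree count in Case B; the latter is exactly where the hypothesis $b\ge5$ is used, to keep $Y^2-\alpha Z^2$ (degree $\le4$) from cancelling the leading term of $f$. The hypothesis that $\Pi$ is not contained in any $F_c$ enters only in Case A, to reduce each fibre $x=c$ to a line.
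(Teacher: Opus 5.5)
Your proof is correct, but it is organized differently from the paper's. The paper works in the affine chart $w=1$ and disposes of $\Pi\subseteq\{w=0\}$ separately. It then splits by $\dim\rho(H)\in\{2,1,0\}$, where $\rho$ is the projection to the $(x,u)$-plane. In the one-dimensional case it bounds each fibre $H\cap\rho^{-1}(c,c^2)$ by redoing the anisotropic computation with $r^2-\alpha s^2$. In the zero-dimensional case it derives $\Pi\subseteq F_c$, a contradiction.

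You instead dualize to the annihilator $L$. Your Case B ($L''=0$, i.e.\ $\ell\mapsto(a_2,a_3,a_5)$ is an isomorphism on $L$) is exactly the paper's Case 1, with the same degree-$b$ argument. Your Case A ($L$ contains a nonzero form in $w,x,u$ only) absorbs the paper's Cases 2 and 3 and the $w=0$ case at once. There $x$ takes at most two values on $S_f\cap\Pi$, and each fibre lies in $\Pi\cap F_c$. That is a proper subspace of $\Pi$, hence at most a line, so Lemma~\ref{lem:no-three-collinear} caps each fibre at $2$ points.

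What your route buys is a uniform treatment of all degenerate planes that reuses Lemma~\ref{lem:no-three-collinear} rather than repeating its computation. It also makes the role of the hypothesis $\Pi\not\subseteq F_c$ transparent: it is used only to say that $\Pi\cap F_c$ is at most a line. The paper's version is more hands-on and self-contained, and it writes the plane in explicit coordinates in each case.
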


\begin{proof}
If $\Pi \subseteq \{w = 0\}$, then $S_f \cap \Pi = \emptyset$ since every point of $S_f$ has $w = 1$. Then the bound holds. Otherwise, let
$
H = \Pi \cap \{w = 1\}
$
be the affine part of the plane. We consider the following cases.

\medskip
\noindent \textbf{Case 1. $\dim \rho(H) = 2$.}

Then $\rho|_H$ is an affine isomorphism onto its image in $\mathbb{A}^2$, so there exist affine linear functions $L_1, L_2, L_3 : \mathbb{A}^2 \to \mathbb{A}^1$ such that
\[
H = \left\{ (x, L_1(x, u), L_2(x, u), u, L_3(x, u)) : x, u \in \mathbb{F}_q \right\}.
\]
Write
\[
\begin{aligned}
L_1(x, u) &= A_0 + A_1 x + A_2 u, \\
L_2(x, u) &= B_0 + B_1 x + B_2 u, \\
L_3(x, u) &= C_0 + C_1 x + C_2 u.
\end{aligned}
\]
A point $s_f(x,y,z) \in S_f$ lies in $H$ if and only if $u = x^2$ and the $v$-coordinate matches. Substituting $u = x^2$ into the linear forms gives
\begin{equation}\label{exKabK3b+1-equation5}
f(x) + (A_0 + A_1 x + A_2 x^2)^2 - \alpha (B_0 + B_1 x + B_2 x^2)^2 - (C_0 + C_1 x + C_2 x^2)=0.
\end{equation}
All terms other than $f(x)$ have degree at most $4$. Since $b \geq 5$ and $f\in \mathcal{M}_b$, the left-hand side of (\ref{exKabK3b+1-equation5}) is also a   polynomial of degree $b$ in $x$, which has at most $b$ roots. Each root uniquely determines a point in $H \cap S_f$, so $|H \cap S_f| \leq b$.

\medskip
\noindent \textbf{Case 2. $\dim \rho(H) = 1$.}

Then $\rho(H)$ is an affine line in the $(x,u)$-affine plane. This line intersects the parabola defined by $u = x^2$ in at most two points, which vertical lines intersect the parabola in at most one point and non-vertical lines yield a monic quadratic equation in $x$.
Fix an intersection point $(c, c^2) \in \rho(H)$. The 
\[
H \cap \rho^{-1}(c, c^2)
\]
is an affine line in $(y,z,v)$-space, written as
\[
(y, z, v) = (y_0, z_0, v_0) + t(r, s, h), \quad t \in \mathbb{F}_q.
\]
A point of $S_f$ lies on this line if and only if
\[
v_0 + t h = (y_0 + t r)^2 - \alpha (z_0 + t s)^2 + f(c).
\]
If $(r,s) \neq (0,0)$, then the coefficient of $t^2$ is $r^2 - \alpha s^2 \neq 0$. Thus the equation is a nonzero quadratic in $t$ with at most two solutions. If $r = s = 0$, then $h \neq 0$ and the equation is a nonzero linear equation in $t$ with exactly one solution.
Thus each intersection $(c,c^2)$ contributes at most two points to $H \cap S_f$. Since there are at most two such intersections and $b \geq 5$, we have
\[
|H \cap S_f| \leq 4 \leq b.
\]

\medskip
\noindent \textbf{Case 3. $\dim \rho(H) = 0$.}

Then there exist constants $c,d \in \mathbb{F}_q$ such that $x = c$ and $u = d$ identically on $H$. If $d \neq c^2$, then $H \cap S_f = \emptyset$ since every point of $S_f$ satisfies $u = x^2$. If $d = c^2$, then $\Pi$ is contained in $F_c$, a contradiction.
\end{proof}

%We now analyze planes that lie inside some exceptional subspace $F_c$. We first compute the polar of $F_c$.
For $c \in \mathbb{F}_q$, define the projective line $K_c$ by
$$
K_c:=\left\{[w:x:y:z:u:v]\in\PG(5,q): w=x=z=0, y+cv+c^2u=0\right\}
$$
%Define the 2-dimensional vector subspace $\widetilde{K}_c$ by
%\[
%\widetilde{K}_c: = \spanop\left\{ (0, 0, -c, 0, 0, 1), \, (0, 0, -c^2, 0, 1, 0) \right\},
%\]
%which corresponds to a projective line.

\begin{lemma} \label{lem:polar-of-Fc}
Let $F_c$ be defined as shown in (\ref{exKabK3b+1-equation4}). Then 
$
F_c^\perp = K_c.
$
\end{lemma}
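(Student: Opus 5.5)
We will argue by a dimension count combined with a direct orthogonality check. $F_c$ is cut out by two independent linear equations, $x-cw=0$ and $u-c^2w=0$, so it is a projective $3$-space and corresponds to a $4$-dimensional subspace $\widetilde F_c\subseteq\F_q^6$. Because $\beta$ is nondegenerate, $F_c^\perp$ is a projective line, i.e.\ a $2$-dimensional subspace of $\F_q^6$. Likewise $K_c$ is defined by the three independent equations $w=0$, $x=0$, $z=0$ and $y+cv+c^2u=0$ inside the $(y,u,v)$-coordinates, which leaves a $2$-dimensional subspace, so $K_c$ is also a projective line. It therefore suffices to prove the single inclusion $K_c\subseteq F_c^\perp$, that is, that every vector of $K_c$ is $\beta$-orthogonal to every vector of $\widetilde F_c$. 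Equality then follows because the two spaces have the same dimension.

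To check the inclusion, we first write down explicit spanning sets. The space $\widetilde F_c$ is spanned by the four vectors $e_1=(1,c,0,0,c^2,0)$, $e_2=(0,0,1,0,0,0)$, $e_3=(0,0,0,1,0,0)$ and $e_4=(0,0,0,0,0,1)$, where coordinates are ordered as $(w,x,y,z,u,v)$. The space $K_c$ is spanned by $k_1=(0,0,-c,0,0,1)$ and $k_2=(0,0,-c^2,0,1,0)$, and both visibly satisfy $y+cv+c^2u=0$.

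Next we compute the eight pairings using $\beta(p_1,p_2)=w_1y_2+y_1w_2+x_1v_2+v_1x_2+z_1z_2+u_1u_2$. For the first basis vector of $\widetilde F_c$ we get $\beta(e_1,k_1)=1\cdot(-c)+c\cdot 1=0$ and $\beta(e_1,k_2)=1\cdot(-c^2)+c^2\cdot 1=0$. For $e_2$, whose only nonzero coordinate is $y$, the pairing picks out the $w$-coordinate of $k_i$, which is $0$. For $e_3$ it picks out the $z$-coordinate of $k_i$, which is $0$. For $e_4$, whose only nonzero coordinate is $v$, it picks out the $x$-coordinate of $k_i$, which is again $0$.

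There is no real obstacle in this argument. The only points needing care are the pairing pattern of $\beta$ ($w\leftrightarrow y$, $x\leftrightarrow v$, with $z$ and $u$ paired to themselves) and the confirmation that the defining equations of $F_c$ and $K_c$ are independent, which is what makes the dimension count exact.
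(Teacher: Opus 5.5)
Your proof is correct and uses the same computation as the paper. The pairings $\beta(p,e_1),\beta(p,e_2),\beta(p,e_3),\beta(p,e_4)$ are exactly the coefficients $y_p+cv_p+c^2u_p$, $w_p$, $z_p$, $x_p$ in the paper's expression $w_py+x_pv+z_pz+(y_p+cv_p+c^2u_p)w$. The only difference is that the paper gets both inclusions at once by requiring this expression to vanish for all $w,y,z,v$, while you check only $K_c\subseteq F_c^\perp$ and obtain equality from the dimension identity that follows from nondegeneracy of $\beta$.
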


\begin{proof}
Suppose that $p = [w_p:x_p: y_p: z_p: u_p: v_p]\in F_c^\perp$. Then for every $[w:cw:y:z:c^2w:v]\in F_c$, we have
\[
w_p y + x_p v + z_p z + (y_p +  v_pc +  u_pc^2) w = 0,
\]
which holds  if and only if $w_p = x_p = z_p = 0$ and $y_p +  v_pc +  u_pc^2 = 0$. The set of such points is precisely $K_c$.
\end{proof}

\begin{lemma} \label{lem:polarity-exceptional}
Let $\Lambda$ be a projective plane containing $K_c$. Then
\[
|S_f \cap \Lambda| \leq 2.
\]
\end{lemma}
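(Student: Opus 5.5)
Since $K_c$ is a line (projective dimension $1$) and $\Lambda$ is a plane containing it, $\Lambda$ is spanned by $K_c$ together with one further point $P\notin K_c$. Every point of $K_c$ has $w=0$, whereas every point of $S_f$ has $w=1$. Hence $K_c\cap S_f=\emptyset$, and it suffices to study the affine part $\Lambda\cap\{w=1\}$.

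\textbf{Step 1: reduce to an affine plane.} If $\Lambda\subseteq\{w=0\}$, there is nothing to prove. Otherwise we may normalize $P=[1:x_0:y_0:z_0:u_0:v_0]$. The affine points of $\Lambda$ are then
\[
P+\lambda\,(0,0,-c,0,0,1)+\mu\,(0,0,-c^2,0,1,0),\qquad \lambda,\mu\in\F_q .
\]
Here the two vectors span $K_c$: both satisfy $w=x=z=0$ and $y+cv+c^2u=0$. Thus the affine plane consists of the points
\[
(x,y,z,u,v)=(x_0,\ y_0-c\lambda-c^2\mu,\ z_0,\ u_0+\mu,\ v_0+\lambda).
\]

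\textbf{Step 2: impose membership in $S_f$.} A point $s_f(x,y,z)$ lies in this plane only if $x=x_0$ and $z=z_0$ are fixed. Then $u=x_0^2$ forces $\mu=u_0-x_0^2$, so $\mu$ is determined. The $v$-coordinate condition then reads
\[
v_0+\lambda=y^2-\alpha z_0^2+f(x_0),
\]
with $y=y_0-c^2\mu-c\lambda$. This gives
\[
\lambda = \bigl(y_0'-c\lambda\bigr)^2 + \text{const},
\]
where $y_0'=y_0-c^2\mu$ is a constant.

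\textbf{Step 3: count solutions.} If $c\neq 0$, this is a genuine quadratic equation in $\lambda$ with leading coefficient $c^2\neq0$, so it has at most two solutions. If $c=0$, then $y=y_0'$ is fixed and the equation determines $\lambda$ uniquely, giving at most one point. Each admissible $\lambda$ determines at most one point of $S_f$, because $(x,y,z)$ are all determined by $\lambda$ and $\mu$. Therefore $|S_f\cap\Lambda|\le 2$.

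The only real obstacle is Step 1: we must check that the parametrization of the affine points of $\Lambda$ is correct, namely that the basis vectors of $K_c$ are the right ones. We must also handle the degenerate case $c=0$, where the dependence of $y$ on $\lambda$ disappears. As an alternative cross-check, one can use Lemma~\ref{lem:no-three-collinear}-style reasoning. Since $x$ and $z$ are constant on $\Lambda\cap S_f$ and $u$ is tied to $x$, the points of $\Lambda\cap S_f$ lie on a single affine line of $\Lambda$ (the set $\mu=$ const). By Lemma~\ref{lem:no-three-collinear}, such a line contains at most two points of $S_f$. This gives the bound directly.
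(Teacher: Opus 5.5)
Your argument is correct and is essentially the paper's proof. The paper uses the linear map $L_c$ with kernel $\widetilde{K}_c$ to show that $x$, $z$ and $y+cv+c^2u$ are constant on $S_f\cap\Lambda$, then solves a quadratic in $y$; your explicit parametrization of the affine part of $\Lambda$ encodes the same thing, and your quadratic in $\lambda$ corresponds to the paper's via $y=y_0'-c\lambda$. The only slip is a harmless sign ($\mu=x_0^2-u_0$, not $u_0-x_0^2$); your alternative via Lemma~\ref{lem:no-three-collinear}, placing $S_f\cap\Lambda$ on the single line $\mu=\mathrm{const}$, is also valid and even shorter.
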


\begin{proof}
Define $\widetilde{K}_c$ by
$$
\widetilde{K}_c= \spanop\left\{ (0, 0, -c, 0, 0, 1), \, (0, 0, -c^2, 0, 1, 0) \right\},
$$
which is the 2-dimensional vector subspace corresponding to $K_c$.
Consider the linear map $L_c : \mathbb{F}_q^6 \to \mathbb{F}_q^4$ defined by
\[
L_c(w, x, y, z, u, v) = (w, x, z, y+cv+c^2u).
\]
The kernel of $L_c$ is given by $w = x = z = 0$ and $y + c v + c^2 u = 0$, which is exactly the $\widetilde{K}_c$.

Since $\Lambda$ is a projective plane containing $K_c$, there is a 3-dimensional vector subspace $\tilde{\Lambda}$ contains $\widetilde{K}_c$. Then
\[
\dim L_c(\tilde{\Lambda}) = \dim \tilde{\Lambda} - \dim \widetilde{K}_c = 3 - 2 = 1.
\]
Thus $L_c(\tilde{\Lambda})$ is a 1-dimensional subspace. Therefore, for all points in $S_f \cap \Lambda$, the
$
x, z, \ell := y + c v + c^2 u
$
are constants since all points of $S_f$ have $w = 1$.

For any point $s_f(x,y,z) \in S_f$, we have
\[
u = x^2, \quad v = y^2 - \alpha z^2 + f(x).
\]
Substituting into the expression for $\ell$ gives
\[
\ell = y + c(y^2 - \alpha z^2 + f(x)) + c^2 x^2.
\]
Then, we have
%Since $x$, $z$, and $\ell$ are fixed, $y$ satisfies the equation
\begin{equation}\label{exKabK3b+1-equation6}
c y^2 + y + c(-\alpha z^2 + f(x)) + c^2 x^2 - \ell = 0.
\end{equation}
Since $x$, $z$, and $\ell$ are fixed,  the left hand of (\ref{exKabK3b+1-equation6}) is the polynomial in $y$. If $c = 0$, then the linear term has coefficient $1$. If $c \neq 0$, then the quadratic term has coefficient $c \neq 0$. Hence there are at most two solutions for $y$, each determining a unique point of $S_f$. Therefore,
\[
|S_f \cap \Lambda| \leq 2. \qedhere
\]
\end{proof}

\section{The bipartite incidence graph}
\label{sec:construction}

Let $G_f$ be the bipartite graph with vertex partition $V(G_f) = L \cup R$, where $L$ and $R$ are disjoint copies of $S_f$. A vertex $p_L \in L$ is adjacent to a vertex $q_R \in R$ if and only if $\beta(p_L, q_R) = 0$.

\begin{lemma} \label{lem:k3t-free}
For every $f \in \mathcal{M}_b$, the graph $G_f$ is $K_{3,b+1}$-free.
\end{lemma}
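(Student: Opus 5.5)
Suppose $G_f$ contains a copy of $K_{3,b+1}$. The plan is to handle both orientations together. Since $\beta$ is symmetric, adjacency between $p_L$ and $q_R$ is symmetric in the roles of $L$ and $R$. So in either orientation we obtain three distinct points $p_1,p_2,p_3\in S_f$ and $b+1$ distinct points $q_1,\dots,q_{b+1}\in S_f$ with $\beta(p_i,q_j)=0$ for all $i,j$. (The two sets may overlap, since $L$ and $R$ are copies of the same set; this does not matter below.)

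By Lemma~\ref{lem:no-three-collinear}, the points $p_1,p_2,p_3$ are not collinear, so they span a projective plane $\Pi$. Every $q_j$ is $\beta$-orthogonal to $p_1,p_2,p_3$, hence to all of $\Pi$, so each $q_j$ lies in the polar plane $\Pi^\perp$. This plane satisfies $\dim\Pi+\dim\Pi^\perp=4$ and $(\Pi^\perp)^\perp=\Pi$. Thus $|S_f\cap\Pi^\perp|\ge b+1$, and also $|S_f\cap \Pi|\ge 3$.

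Lemma~\ref{lem:plane-sections} applied to $\Pi^\perp$ now forces $\Pi^\perp\subseteq F_c$ for some $c\in\F_q$. Taking polars reverses inclusion, so $K_c=F_c^\perp\subseteq (\Pi^\perp)^\perp=\Pi$, using Lemma~\ref{lem:polar-of-Fc}. Then Lemma~\ref{lem:polarity-exceptional} gives $|S_f\cap\Pi|\le 2$. This contradicts the fact that $\Pi$ contains the three points $p_1,p_2,p_3\in S_f$.

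There is no real obstacle here. The only point to check is that polarity reverses inclusion, which holds because $\Pi^\perp\subseteq F_c$ means every point of $F_c^\perp$ is orthogonal to every point of $\Pi^\perp$. The rest is bookkeeping, namely confirming that the argument is independent of which side carries the three vertices, which follows from the symmetry of $\beta$.
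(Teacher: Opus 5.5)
Your proof is correct and takes the same approach as the paper. You span a plane $\Pi$ from the three vertices via Lemma~\ref{lem:no-three-collinear}, place the $b+1$ common neighbours in $\Pi^\perp$, use Lemma~\ref{lem:plane-sections} to force $\Pi^\perp\subseteq F_c$, and then pass to $K_c=F_c^\perp\subseteq\Pi$ to contradict Lemma~\ref{lem:polarity-exceptional}, with the symmetry of $\beta$ covering the other orientation; the only cosmetic difference is that you argue by contradiction from a putative copy of $K_{3,b+1}$, whereas the paper bounds the common neighbourhood of any three vertices directly.
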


\begin{proof}
Take any three distinct vertices $p_1, p_2, p_3 \in L$. By Lemma~\ref{lem:no-three-collinear}, the vertices $p_1, p_2, p_3$ are not collinear, so they span a unique projective plane
\[
\Pi = \langle p_1, p_2, p_3 \rangle.
\]
By definition of adjacency, the common neighbors of $p_1,p_2,p_3$ in $R$ are exactly the points in $S_f \cap \Pi^\perp$.

If $\Pi^\perp$ is not contained in any $F_c$, then by Lemma~\ref{lem:plane-sections}, we have
\[
|S_f \cap \Pi^\perp| \leq b,
\]
which implies that the three vertices have at most $b$ common neighbors.

So we assume that $\Pi^\perp \subseteq F_c$ for some $c$. By polarity reverses inclusion, we have
\begin{align}\label{eq6}
F_c^\perp \subseteq (\Pi^\perp)^\perp = \Pi.
\end{align}
Notice that $F_c^\perp = K_c$ by Lemma~\ref{lem:polar-of-Fc}. Thus by (\ref{eq6}), we have $K_c \subseteq \Pi$. Now we apply  Lemma~\ref{lem:polarity-exceptional} to conclude that
\[
|S_f \cap \Pi| \leq 2,
\]
which contradicts the fact that $p_1,p_2,p_3 \in S_f \cap \Pi$. Therefore this case cannot occur.

By symmetry of the bilinear form $\beta$, the adjacency relation in $G_f$ is symmetric with respect to the two vertex partitions $L$ and $R$. And  $\beta(p,q)=\beta(q,p)$ implies that the graph construction treats the  vertex sets $L$ and $R$ identically. So we can get the same result for any three vertices in $R$ by the similar proof. Therefore, $G_f$ contains no copy of $K_{3,b+1}$. 
%This completes the proof of Lemma \ref{lem:k3t-free}.
\end{proof}

It remains to show that there exists a polynomial $f$ for which $G_f$ contains many copies of $K_{a,b}$. We achieve this by constructing a large family of planes, where each such plane induces a  subgraph $K_{b,b}$ in $G_f$.

\begin{lemma} \label{lem:good-plane-count}
Fix integers $b \geq 5$ and $4 \leq a \leq b$. For all sufficiently large odd prime powers $q$, there exists a $f \in \mathcal{M}_b$ such that  the bipartite graph $G_f$ has $2q^3$ vertices and $\Omega_{a,b}(q^9)$ copies of $K_{a,b}$.
\end{lemma}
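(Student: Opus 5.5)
The plan is to exhibit about $q^9$ projective planes $\Pi$ with $|S_f\cap\Pi|=b$ and $|S_f\cap\Pi^\perp|=b$. Each such plane gives a copy of $K_{b,b}$ in $G_f$: every point of $\Pi$ is $\beta$-orthogonal to every point of $\Pi^\perp$, so $S_f\cap\Pi\subseteq L$ is completely joined to $S_f\cap\Pi^\perp\subseteq R$. Each $K_{b,b}$ contains at least one $K_{a,b}$. The resulting copies of $K_{a,b}$ are distinct for distinct planes: since $a\ge 4\ge 3$, any three of the $a$ left vertices are non-collinear by Lemma~\ref{lem:no-three-collinear}, so they span exactly the plane $\Pi$, and the copy determines $\Pi$. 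It therefore suffices to find $f$ with $\Omega_b(q^9)$ such ``doubly $b$-rich'' planes. The vertex count $2q^3$ is immediate from $|S_f|=q^3$.

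I would parametrize the candidate planes as in Case~1 of Lemma~\ref{lem:plane-sections}. For $\theta=(A_0,A_1,A_2,B_0,B_1,B_2,C_0,C_1,C_2)\in\F_q^9$, let $\Pi_\theta$ be the projective closure of
\[
\{(x,L_1,L_2,u,L_3)\},
\]
where $L_1,L_2,L_3$ are the affine functions of $(x,u)$ with coefficients $A_i$, $B_i$, $C_i$. There are $q^9$ such planes, and they are pairwise distinct. By \eqref{exKabK3b+1-equation5}, $S_f\cap\Pi_\theta$ is in bijection with the roots of $f+g_\theta$, where
\[
g_\theta=(A_0+A_1x+A_2x^2)^2-\alpha(B_0+B_1x+B_2x^2)^2-(C_0+C_1x+C_2x^2)
\]
has degree at most $4<b$. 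Hence $|S_f\cap\Pi_\theta|=b$ exactly when $f+g_\theta$ splits into $b$ distinct linear factors.

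Next I would compute $\Pi_\theta^\perp$ explicitly. It is the set of $p$ orthogonal to the three spanning vectors of $\Pi_\theta$, namely the point at $(x,u)=(0,0)$ and the two direction vectors. I expect that for all $\theta$ outside a set of size $O(q^8)$, the plane $\Pi_\theta^\perp$ meets the chart $w=1$ in a plane with $\dim\rho=2$. The reason is that the failure conditions (projective $w$-part vanishing, or the $(x,u)$-projection degenerating) are nontrivial polynomial conditions in $\theta$, and I would check nontriviality at one explicit $\theta$. For such $\theta$, $\Pi_\theta^\perp=\Pi_{\theta'}$ for some $\theta'=\theta'(\theta)$, so $|S_f\cap\Pi_\theta^\perp|=b$ exactly when $f+g_{\theta'}$ splits with distinct roots. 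The main obstacle is this step: the explicit polar computation together with the genericity claim. If the direct computation is messy, I would instead argue by dimension counting, using that the polar map is an algebraic bijection on planes and that the non-Case-1 planes form a lower-dimensional family.

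Finally I would average over $f$. For each good $\theta$, apply Lemma~\ref{lem:double-shift} to the monic polynomial $F=f+g_\theta$ with shift $\delta=g_{\theta'}-g_\theta$, which has degree less than $b$. Since $f\mapsto f+g_\theta$ is a bijection of $\mathcal{M}_b$, at least $\eta_b q^b$ polynomials $f\in\mathcal{M}_b$ make $\Pi_\theta$ doubly $b$-rich, uniformly in $\theta$. Summing over the $q^9-O(q^8)$ good $\theta$, the total number of pairs (good $\theta$, such $f$) is at least $\tfrac12\eta_b q^{9+b}$ for $q\ge q_0(b)$. By pigeonhole over the $q^b$ choices of $f$, some $f$ has at least $\tfrac12\eta_b q^9$ doubly $b$-rich planes. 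This yields $\Omega_{a,b}(q^9)$ copies of $K_{a,b}$ in $G_f$.
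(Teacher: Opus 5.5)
Your proposal is correct and follows essentially the same route as the paper: the same $q^9$-plane family with $|S_f\cap\Pi_\theta|$ counted by the roots of $f+g_\theta$, closure of a generic subfamily under $\perp$, Lemma~\ref{lem:double-shift} applied through the bijection $f\mapsto f+g_\theta$ with $\delta=g_{\theta'}-g_\theta$, averaging over $f$, and Lemma~\ref{lem:no-three-collinear} to make the resulting copies of $K_{a,b}$ distinct. For the step you flag as the main obstacle, the paper writes down an explicit basis of $\Pi_\theta^\perp$ and checks that $B_2\neq 0$ already forces $\Pi_\theta^\perp=\Pi_{\theta'}$, leaving only $q^8$ bad $\theta$; your counting fallback also settles it, since $\perp$ permutes the planes of $\PG(5,q)$ and only $\binom{6}{3}_q-q^9=O(q^8)$ planes are not of the form $\Pi_\theta$.
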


\begin{proof}
Let $\PG(5,q)$ be a 5-dimensional projective space  over $\mathbb{F}_q$  with the nondegenerate symmetric bilinear form $\beta$ that induces an orthogonal polarity. Recall the point set
\[
S_f = \left\{ s_f(x, y, z) : x, y, z \in \mathbb{F}_q \right\},
\]
where 
\[
s_f(x, y, z) = \left[ 1 : x : y : z : x^2 : y^2 - \alpha z^2 + f(x) \right],
\]
with $\alpha \in \mathbb{F}_q$ a fixed non-square and $f \in \mathcal{M}_b$. By the definition of  $G_f$, $G_f$ has exactly $2|S_f| = 2q^3$ vertices.

We first introduce a 9-parameter family of projective planes. 
For any triple of polynomials of degree at most $2$:
\[
\begin{aligned}
A(t) &= a_0 + a_1 t + a_2 t^2, \\
B(t) &= b_0 + b_1 t + b_2 t^2, \\
C(t) &= c_0 + c_1 t + c_2 t^2,
\end{aligned}
\]
we define the projective plane $\Pi(A,B,C)$ as the set of points satisfying the linear system
\[
\begin{aligned}\label{exKabK3b+1-equation7}
y &= a_0 w + a_1 x + a_2 u, \\
z &= b_0 w + b_1 x + b_2 u, \\
v &= c_0 w + c_1 x + c_2 u.
\end{aligned}
\]
Each equation is a homogeneous linear relation, and the three equations are linearly independent (each isolates a distinct coordinate $y,z,v$ with leading coefficient $1$). The solution space is therefore a $3$-dimensional vector subspace, i.e., a projective plane in $\PG(5,q)$.
Distinct triples $(A,B,C)$ yield distinct planes. Indeed, if two triples produce the same plane, the linear expressions for $y,z,v$ in terms of $w,x,u$ must coincide termwise, so all coefficients $a_i,b_i,c_i$ must be equal. Thus, the full family therefore contains $q^3 \cdot q^3 \cdot q^3 = q^9$ distinct planes.

We now establish a bijection between $S_f \cap \Pi(A,B,C)$ and the roots of a polynomial over $\mathbb{F}_q$. A point $s_f(x,y,z) \in S_f$ belongs to $\Pi(A,B,C)$ if and only if its coordinates satisfy the three plane equations. Substituting $w=1$, $u=x^2$, $v = y^2 - \alpha z^2 + f(x)$ into (\ref{exKabK3b+1-equation7}), we get
\[
\begin{aligned}
~~~~~~~~~~~~~~~~~&y = a_0 + a_1 x + a_2 x^2 = A(x), \\
&z= b_0 + b_1 x + b_2 x^2 = B(x), \\
&y^2 - \alpha z^2 + f(x) = c_0 + c_1 x + c_2 x^2 = C(x).
\end{aligned}
\]
Substitute $y = A(x)$ and $z = B(x)$ into $y^2 - \alpha z^2 + f(x) = C(x)$, we have
\[
f(x) + A(x)^2 - \alpha B(x)^2 - C(x) = 0.
\]
Define %the \emph{section polynomial}
\[
P_{\Pi,f}(t) = f(t) + G_\Pi(t), \quad \text{where } G_\Pi(t) = A(t)^2 - \alpha B(t)^2 - C(t).
\]
Then $s_f(x,y,z) \in \Pi$ if and only if $x$ is a root of $P_{\Pi,f}$ over $\mathbb{F}_q$. Moreover, each root $x$ uniquely determines $y=A(x)$ and $z=B(x)$, and then uniquely determines a point in $S_f \cap \Pi$. This gives a bijection between $S_f \cap \Pi$ and the set of distinct roots of $P_{\Pi,f}$ over $\mathbb{F}_q$. In particular, $|S_f \cap \Pi|$ equals the number of distinct roots of $P_{\Pi,f}$ over $\mathbb{F}_q$.

Since $\deg A(t) \leq 2$ and $\deg B(t) \leq 2$, we have $\deg(A(t)^2) \leq 4$ and $\deg(B(t)^2) \leq 4$. So $\deg G_\Pi \leq 4$. By assumption $b \geq 5 > 4$,  the leading term of $P_{\Pi,f}$ is identical to that of $f$. Thus $P_{\Pi,f}$ is always a monic polynomial of degree $b$.

The following claim shows that the subfamily of planes with $b_2 \neq 0$ is closed under the orthogonal polarity.

\begin{claim}\label{clm:polar-closure}
If $b_2 \neq 0$, then $\Pi(A,B,C)^\perp = \Pi(A',B',C')$ for some  polynomials $A',B',C'$ of degree at most $2$.
\end{claim}

\begin{proof}[Proof of Claim \ref{clm:polar-closure}]
Consider the following three points in $\mathbb{F}_q^6$, 
\[
\begin{aligned}
r_A &= (1, 0, -a_0, 0, -a_2, -a_1), \\
r_B &= (0, 0, -b_0, 1, -b_2, -b_1), \\
r_C &= (0, 1, -c_0, 0, -c_2, -c_1).
\end{aligned}
\]
We first verify that each point is orthogonal to every point in $\Pi(A,B,C)$. Take arbitrary $p = (w_p,x_p,y_p,z_p,u_p,v_p) \in \Pi(A,B,C)$. Then
\[
y_p = a_0 w_p + a_1 x_p + a_2 u_p.
\]
Thus, we get
\begin{align*}
\beta(r_A,p)
&= w_{r_A}y_p + y_{r_A}w_p + x_{r_A}v_p + v_{r_A}x_p + z_{r_A}z_p + u_{r_A}u_p \\
&= 1\cdot y_p + (-a_0)\cdot w_p + 0\cdot v_p + (-a_1)\cdot x_p + 0\cdot z_p + (-a_2)\cdot u_p \\
&= y_p - a_0 w_p - a_1 x_p - a_2 u_p\\
&=0
\end{align*}
%Combined with the plane relation $y_p = a_0 w_p + a_1 x_p + a_2 u_p$, we get
%\[
%\beta(r_A,P) = 0.
%\]
Since $p$ is arbitrary in $\Pi(A,B,C)$, we conclude that $r_A \perp \Pi(A,B,C)$. 
The orthogonality of $r_B$ and $r_C$ follows by identical reasoning, using the defining equations for $z_P$ and $v_P$ respectively.

The vectors $r_A, r_B, r_C$ are also linearly independent. Indeed, each has a unique coordinate equal to $1$ (the $w$-coordinate for $r_A$, the $z$-coordinate for $r_B$, and the $x$-coordinate for $r_C$), so no nontrivial linear combination can yield the zero vector. Since $\beta$ is nondegenerate, the orthogonal complement of a $3$-dimensional subspace is also $3$-dimensional. Therefore $\{r_A, r_B, r_C\}$ forms a basis for $\Pi(A,B,C)^\perp$.

Any vector in $\Pi(A,B,C)^\perp$ can be written as $\lambda_1 r_A + \lambda_2 r_B + \lambda_3 r_C$ with $\lambda,\lambda_2,\lambda_3 \in \mathbb{F}_q$ and coordinates
\[
\begin{array}{lll}
w = \lambda_1, &\quad x = \lambda_3, &\quad y = -a_0\lambda_1 - b_0\lambda_2 - c_0\lambda_3, \\
z = \lambda_2, &\quad u = -a_2\lambda_1 - b_2\lambda_2 - c_2\lambda_3, &\quad v = -a_1\lambda_1 - b_1\lambda_2- c_1\lambda_3.
\end{array}
\]
We now express $y,z,v$ as linear functions of $w,x,u$. From the coordinate equations, immediately $\lambda_1 = w$ and $\lambda_3 = x$. 
Substitute these into the expression for $u$, we get that 
\begin{align}\label{eq11}
u = -a_2 w - b_2 \lambda_2 - c_2 x.
\end{align}
Since $b_2 \neq 0$, by (\ref{eq11}), we obtain that 
\begin{align}\label{eq12}
\lambda_2 = -\frac{a_2}{b_2} w - \frac{c_2}{b_2} x - \frac{1}{b_2} u.
\end{align}
Substitute $\lambda_1=w$, $\lambda_3=x$, and the equality (\ref{eq12}) into the formulas for $z$, $y$, and $v$, we obtain that
\[
\begin{aligned}
z &= \left(-\frac{a_2}{b_2}\right) w + \left(-\frac{c_2}{b_2}\right) x + \left(-\frac{1}{b_2}\right) u, \\
y&= \left(-a_0 + \frac{a_2 b_0}{b_2}\right) w
+ \left(-c_0 + \frac{b_0 c_2}{b_2}\right) x
+ \frac{b_0}{b_2} u, \\
v&= \left(-a_1 + \frac{a_2 b_1}{b_2}\right) w
+ \left(-c_1 + \frac{b_1 c_2}{b_2}\right) x
+ \frac{b_1}{b_2} u.
\end{aligned}
\]
All three coordinates $y,z,v$ are linear combinations of $w,x,u$ with constant coefficients. Defining $A',B',C'$ to be the  polynomials of degree at most $2$ with these coefficients, we conclude $\Pi(A,B,C)^\perp = \Pi(A',B',C')$.
\end{proof}

We call a plane $\Pi = \Pi(A,B,C)$ with $b_2 \neq 0$ \emph{good} if both $P_{\Pi,f}$ and $P_{\Pi^\perp,f}$ split completely into distinct linear factors over $\mathbb{F}_q$. Since both are monic polynomials of degree $b$, this condition is equivalent to $|S_f \cap \Pi| = b$ and $|S_f \cap \Pi^\perp| = b$.

Fix an arbitrary plane $\Pi$ in the subfamily. By Claim~\ref{clm:polar-closure}, $\Pi^\perp = \Pi(A',B',C')$ for some $A',B',C'$. Define
\[
G_{\Pi^\perp}(t) = A'(t)^2 - \alpha B'(t)^2 - C'(t),
\]
which satisfies $\deg G_{\Pi^\perp} \leq 4$ by the same degree argument. Let
\[
\delta(t) = G_{\Pi^\perp}(t) - G_\Pi(t).
\]
Then $\deg \delta \leq 4 < b$.

The condition that both $f + G_\Pi$ and $f + G_{\Pi^\perp}$ split completely into distinct linear factors is equivalent to the condition that both $F$ and $F+\delta$ split completely, where we set $F = f + G_\Pi$. Indeed, substituting $G_{\Pi^\perp} = G_\Pi + \delta$ gives
\[
f + G_{\Pi^\perp} = f + G_\Pi + \delta = F + \delta.
\]

We now assert that the map $\Phi: \mathcal{M}_b \to \mathcal{M}_b$ given by $\Phi(f) = f + G_\Pi$ is a bijection.
Since $\deg G_\Pi \leq 4 < b$, adding $G_\Pi$ does not change the leading term of $f$, so $F = f+G_\Pi$ remains monic of degree $b$.
If $\Phi(f_1) = \Phi(f_2)$, then $f_1 + G_\Pi = f_2 + G_\Pi$, so $f_1 = f_2$.
For any $F_0 \in \mathcal{M}_b$, set $f_0 = F_0 - G_\Pi$. 
Again $\deg G_\Pi < b$ implies that $f_0$ is monic of degree $b$, so $f_0 \in \mathcal{M}_b$ and $\Phi(f_0) = F_0$. 
Thus, the assertion holds.

By Lemma~\ref{lem:double-shift}, for fixed $\delta$ with $\deg \delta < b$, the number of $F \in \mathcal{M}_b$ for which both $F$ and $F+\delta$ split completely into distinct linear factors is at least $\eta_b q^b$, where $\eta_b > 0$ depends only on $b$. By the bijection $\Phi$, the number of $f \in \mathcal{M}_b$ making $\Pi$ good is also at least $\eta_b q^b$.
Since $|\mathcal{M}_b| = q^b$, if $f$ is chosen uniformly at random from $\mathcal{M}_b$, the probability that $\Pi$ is good is at least
$\frac{\eta_b q^b}{q^b} = \eta_b.$
This lower bound holds uniformly for every plane in the subfamily.

We next present the expected number of good planes.
Let $X_f$ denote the total number of good planes for a given $f$. 
We count the size of the $b_2 \neq 0$ subfamily:
$A(t)$ has $q^3$ choices (three free coefficients);
$B(t)$ has $q^2(q-1)$ choices, where $b_2 \in \mathbb{F}_q^*$ gives $q-1$ options,
and $b_0,b_1$ each have $q$ options;
and $C(t)$ has $q^3$ choices.
So the total number of planes in the subfamily is
\[
q^3 \cdot q^2(q-1) \cdot q^3 = (q-1) q^8.
\]
By linearity of expectation,
\[
\mathbb{E}_f [X_f] \geq \eta_b \cdot (q-1) q^8.
\]
Since the expectation meets this lower bound, there exists some $f \in \mathcal{M}_b$ for which
\[
X_f \geq \eta_b (q-1) q^8 = \Omega_b(q^9).
\]
That is, there exists an $f$ such that the number of good planes is $\Omega_b(q^9)$.

In the rest of proof, we  convert the count of good planes into a lower bound on the number of $K_{a,b}$ copies in $G_f$.
Let $\Pi$ be a good plane. Then $|S_f \cap \Pi| = b$ and $|S_f \cap \Pi^\perp| = b$. By definition of the orthogonal complement, every point in $\Pi$ is orthogonal to every point in $\Pi^\perp$. Therefore, the left vertices of $L$ corresponding to $S_f \cap \Pi$ and the right vertices of $R$ corresponding to $S_f \cap \Pi^\perp$ induce a complete bipartite subgraph $K_{b,b}$ in $G_f$.

Distinct good planes yield distinct $K_{b,b}$ subgraphs. Suppose two good planes $\Pi_1$ and $\Pi_2$ induce the same left vertex set $L = S_f \cap \Pi_1 = S_f \cap \Pi_2$. Since $b \geq 5$, we may choose any three points from $L$. By Lemma~\ref{lem:no-three-collinear}, no three points of $S_f$ are collinear, so three points uniquely determine a projective plane. Hence $\Pi_1 = \Pi_2$. Thus each good plane corresponds to a unique $K_{b,b}$.

Each such $K_{b,b}$ contains $\binom{b}{a}$ distinct copies of $K_{a,b}$, obtained by selecting any $a$ vertices from the left partition $L$ and all $b$ vertices from the right partition $R$. Since $a \geq 4$, any $K_{a,b}$ obtained this way has at least $3$ left vertices, which uniquely determine the underlying good plane. Therefore,  the number of $K_{a,b}$ copies in $G_f$ satisfies
\[
N(K_{a,b}, G_f) \geq \binom{b}{a} \cdot \eta_b (q-1) q^8 = \Omega_{a,b}(q^9).
\]

The proof of Lemma \ref{lem:good-plane-count} is complete.
\end{proof}

\section{Proof of Theorem~\ref{thm:main}}\label{sec:proof}

We establish the result by proving matching upper and lower bounds.

From (\ref{equation-upper_bound}), we obtain an upper bound for $\ex(n, K_{a,b}, K_{3,b+1})$. For the sake of completeness, here we attach the proof. 

Let $G$ be an $n$-vertex $K_{3,b+1}$-free graph. For every subset $T\subseteq V(G)$, let
\[
N(T)=\bigcap_{v\in T}N(v)
\]
denote the common neighborhood of $T$. Since $G$ is $K_{3,b+1}$-free, we have
\[
|N(T)|\le b
\]
for every $T\in\binom{V(G)}{3}$.

We count directed copies of $K_{a,b}$, namely ordered pairs $(A,B)$ of disjoint vertex sets with
$|A|=a$ and $|B|=b$ such that every vertex of $A$ is adjacent to every vertex of $B$. Let
$\vec{N}(K_{a,b},G)$ denote the number of such directed copies. Since each copy of $K_{a,b}$
corresponds to at most two directed copies, it suffices to estimate
$\vec{N}(K_{a,b},G)$.

Fix a directed copy $(A,B)$, and let $T\in\binom{A}{3}$. Since
$B\subseteq N(T)$ and $|N(T)|\le b$, we have
\[
N(T)=B.
\]
Thus, once $T$ is fixed, the set $B$ is uniquely determined.

Now let $T'\in\binom{B}{3}$. Observe that
\[
N(B)=\bigcap_{v\in B}N(v)\subseteq N(T').
\]
Since $|N(T')|\le b$ and $T\subseteq A\subseteq N(B)$, it follows that
\[
|N(B)\setminus T|\le b-3.
\]
Hence, after fixing $T$, the remaining $a-3$ vertices of $A$ can be chosen in at most
\[
\binom{b-3}{a-3}
\]
ways.

Finally, we count pairs $((A,B),T)$ with $T\in\binom{A}{3}$. On the one hand, each directed
copy contributes $\binom{a}{3}$ such pairs. On the other hand, there are $\binom{n}{3}$ choices
for $T$, and each determines at most $\binom{b-3}{a-3}$ directed copies. Therefore,
\[
\binom{a}{3}\vec{N}(K_{a,b},G)
\le
\binom{n}{3}\binom{b-3}{a-3},
\]
which yields
\[
\vec{N}(K_{a,b},G)
\le
\frac{\binom{n}{3}\binom{b-3}{a-3}}{\binom{a}{3}}
=
O_{a,b}(n^3).
\]
Since each copy of $K_{a,b}$ gives rise to at most two directed copies, we conclude that
\[
\ex(n,K_{a,b},K_{3,b+1})=O_{a,b}(n^3).
\]
This proves the upper bound in Theorem~\ref{thm:main}.

In the following, we prove the corresponding lower bound.
We choose an appropriate prime power to serve as the base of our finite geometric construction. Let $n$ be sufficiently large, and define
\begin{align}\label{eq8}
   y = \left(\frac{n}{2}\right)^{1/3}, \quad m = \left\lfloor \frac{y}{2} \right\rfloor. 
\end{align}
By the Bertrand-Chebyshev Theorem, 
%see \cite{chebyshev1854memoire}), 
there exists a prime integer $q$ satisfying 
\begin{align}\label{eq9}
  m<q<2m. 
\end{align}
For sufficiently large $n$, $q$ is odd and exceeds the size threshold required for Lemma~\ref{lem:double-shift}. 

We claim that the prime $q$ satisfies the two conditions for extending our construction from the  size $2q^3$ to an arbitrary large $n$-vertex graph.
On the one hand, combining (\ref{eq8}) and (\ref{eq9}), we have
$
q < 2m \leq y,
$
and thus,
$
2q^3 < 2y^3 = n,
$
which implies that the bipartite graph $G_f$ on $2q^3$ vertices has strictly smaller than $n$ vertices. We may thus obtain an $n$-vertex graph by adding $n - 2q^3$ isolated vertices to $G_f$. One can see that adding isolated vertices neither contains any copy of the forbidden subgraph $K_{3,b+1}$ nor removes any existing copy of $K_{a,b}$, so both the forbidden the subgraph count are preserved.
On the other hand, for sufficiently large $y$, combining (\ref{eq8}) and (\ref{eq9}), we have $q>m\geq y/3$. Thus by (\ref{eq8}),
we obtain
\[
q^9 > \left(\frac{y}{3}\right)^9 = \frac{y^9}{3^9} = \frac{n^3}{8 \cdot 3^9},
\]
which implies that $q^9 = \Omega(n^3)$.
By Lemma~\ref{lem:good-plane-count}, the number of $K_{a,b}$ copies in $G_f$ is $\Omega_{a,b}(q^9)$. The bound above therefore ensures that the subgraph count remains of order $\Omega_{a,b}(n^3)$ even after adding  isolated vertices.

Now applying Lemma~\ref{lem:good-plane-count} to this prime $q$, we obtain a monic degree-$b$ polynomial $f$ and the associated bipartite graph $G_f$ on $2q^3$ vertices. By Lemmas \ref{lem:k3t-free} and \ref{lem:good-plane-count}, $G_f$ is $K_{3,b+1}$-free and contains $\Omega_{a,b}(q^9)$ copies of $K_{a,b}$. We then add $n - 2q^3$ isolated vertices to $G_f$ to obtain an $n$-vertex graph $G$. Recall that adding isolated vertices neither creates any copy of $K_{3,b+1}$ nor removes any existing copy of $K_{a,b}$, $G$ is also $K_{3,b+1}$-free. So the number of $K_{a,b}$ copies in $G$ satisfies
\[
N(K_{a,b}, G) = N(K_{a,b}, G_f) = \Omega_{a,b}(q^9) = \Omega_{a,b}(n^3).
\]
This proves the lower bound
\[
\ex(n, K_{a,b}, K_{3,b+1}) = \Omega_{a,b}(n^3).
\]

Combining the upper and lower bounds, we have
\[
\ex(n, K_{a,b}, K_{3,b+1}) = \Theta_{a,b}(n^3),
\]
which completes the proof of Theorem \ref{thm:main}.

\section{Concluding remarks} \label{sec:concluding}

Our construction uses an explicit finite-field point set in $\PG(5, q)$ with a univariate 
polynomial parameter. The key new ingredient is the double-shift complete splitting 
lemma (Lemma~\ref{lem:double-shift}), which is derived from the Andrade et al's theorem on the independence of factorization types of 
shifted polynomials. This lemma allows us to show that there are many planes for which 
both the plane and its polar intersect the point set in exactly $b$ points, each such pair 
yielding a copy of $K_{b,b}$ in the incidence graph.

%It would be interesting to determine whether the threshold $t = b+1$ is best possible for all $b \geq 5$. For even $b$, this is known to be the case, but for odd $b$ the question remains open. While our result shows that taking $t=b+1$ suffices for the desired bound, it leaves open the question of whether some integer $t<b+1$ may also satisfy the required condition.

A natural direction is to extend these results to larger values of $s$. The 
general question is whether $\ex(n, K_{a,b}, K_{s,t}) = \Theta(n^s)$ whenever $t$ is 
sufficiently large compared to $b$. The cases $s = 2$ and $s = 3$ are now understood, 
but the problem for $s \geq 4$ remains open.

\end{document}